\newcommand{\scrF}{\mathscr{F}}
\newtheorem{theorem}{Theorem}%  meant for continuous numbers
\begin{document}

\title[Article Title]{Self-Decomposable Laws Associated with  General Tempered Stable (GTS) Distribution and their Simulation Applications}

%%=============================================================%%
%% Prefix	-> \pfx{Dr}
%% GivenName	-> \fnm{Joergen W.}
%% Particle	-> \spfx{van der} -> surname prefix
%% FamilyName	-> \sur{Ploeg}
%% Suffix	-> \sfx{IV}
%% NatureName	-> \tanm{Poet Laureate} -> Title after name
%% Degrees	-> \dgr{MSc, PhD}
%% \author*[1,2]{\pfx{Dr} \fnm{Joergen W.} \spfx{van der} \sur{Ploeg} \sfx{IV} \tanm{Poet Laureate} 
%%                 \dgr{MSc, PhD}}\email{iauthor@gmail.com}
%%=============================================================%%

\author*[1]{\fnm{Aubain} \sur{Nzokem}}\email{hilaire77@gmail.com}

%\author[1,2]{\fnm{Third} \sur{Author}}\email{iiiauthor@gmail.com}
%\equalcont{These authors contributed equally to this work.}

%\affil*[1]{\orgdiv{Department of Mathematics \& Statistics}, \orgname{York University}, \orgaddress{\street{4700 Keele Street}, \city{Toronto}, \postcode{M3J1P3}, \state{ON}, \country{Canada}}}

%\affil[3]{\orgdiv{Department}, \orgname{Organization}, \orgaddress{\street{Street}, \city{City}, \postcode{610101}, \state{State}, \country{Country}}}

%%==================================%%
%% sample for unstructured abstract %%
%%==================================%%

\abstract{The paper describes the self-decomposable distribution and the background driving L\'evy process (BDLP)  associated with the Generalized Tempered Stable (GTS) distribution. Two distributions are provided: the background driving L\'evy process (BDLP) of the GTS distribution and the self-decomposable distribution generated by the GTS distribution as BDLP. The derived self-decomposable distribution and the GTS distribution are used as stationary distribution in the Ornstein-Uhlenbeck type process. A simulation method, based on sampling the random integral representation, is applied to mimic the S\&P 500  Index and Bitcoin daily cumulative return process.}

\keywords{Self-Decomposable, Background Driving L\'evy Process (BDLP), Ornstein–Uhlenbeck Process, Generalized Tempered Stable Distribution}

%%\pacs[JEL Classification]{D8, H51}

%%\pacs[MSC Classification]{35A01, 65L10, 65L12, 65L20, 65L70}

\maketitle
 \section{Introduction}\label{sec1}
 \noindent
\noindent
The Tempered stable distribution was introduced to overcome the limitations of the stable distribution and the normal distribution. It was shown \cite{rosinski2007tempering} that The tempered stable process behaves in the short times as a stable process and in the long term as a Gaussian process. The tails of the Tempered stable distribution are heavier than the normal distribution and thinner than the stable distribution \cite{kim2008financial,kim2009new}. Tempered stable models are used in several areas of physics \cite{ JanRosinski2010}, inclusing turbulence, plasma physic , solar winds, and their popularity is still growing in financial applications.\\
Although the spectral measure\cite{rosinski2007tempering, kim2008financial, bianchi2011tempered} provide the analytical properties of all classes of Tempered stable distributions,  the Generalised tempered stable (GTS) process, which is the most popular subclass, is constructed \cite{rachev2011financial,kuchler2013tempered} by tempering the L\'evy measure of a stable distribution. The six-parameter L\'evy measure of the GTS distribution (\ref{eq:l3}) is a product of a tempering or tilting function ($q(y)$) (\ref{eq:l1}) and a L\'evy measure of the $\alpha$-stable distribution ($M_{stable}(dy)$) (\ref{eq:l2})  described as follows.
 \begin{align}
 q(y) &= e^{-\lambda_{+}y} \boldsymbol{1}_{(0,\infty)}(y)+ e^{-\lambda_{-}|y|} \boldsymbol{1}_{(-\infty, 0)}(y) \label{eq:l1}\\
M_{stable}(dy) &=\left(\frac{\alpha_{+}}{y^{1+\beta_{+}}} \boldsymbol{1}_{(0,\infty)}(y)+ \frac{\alpha_{-}}{|y|^{1+\beta_{-}}} \boldsymbol{1}_{(-\infty, 0)}(y)\right) dy \label{eq:l2}\\
M(dy) =q(y)M_{stable}(dy)&=\left[\frac{\alpha_{+}}{y^{1+\beta_{+}}} e^{-\lambda_{+}y}\boldsymbol{1}_{(0,\infty)}(y)\right] dy  + \left[\frac{\alpha_{-}}{|y|^{1+\beta_{-}}}e^{-\lambda_{-}|y|} \boldsymbol{1}_{(-\infty, 0)}(y)\right] dy \label{eq:l3}
 \end{align}
Where $0\leq \beta_{+}\leq 1$, $0\leq \beta_{-}\leq 1$, $\alpha_{+}\geq 0$, $\alpha_{-}\geq 0$, $\lambda_{+}\geq 0$ and $\lambda_{-}\geq 0$. \\

\noindent
The GTS distribution in (\ref{eq:l3}) is a six-parameter family of infinitely divisible distribution, which covers several well-known distribution subclasses: the KoBol distribution \cite{boyarchenko2002non} is obtained by substituting $\beta=\beta_{+}=\beta_{-}$, the truncated L\'evy flight \cite{kuchler2013tempered} by substituting $\lambda=\lambda_{+}= \lambda_{-} $ and $\beta=\beta_{+}=\beta_{-}$, the bilateral Gamma distributions \cite{KUCHLER2008261} by substituting $\beta_{+}=\beta_{-}=0$, the Variance Gamma distributions \cite{madan1998variance, nzokem2022,nzokem_2021b,nzokem2021fitting, Nzokem_Montshiwa_2023} by substituting $\alpha= \alpha_{+}= \alpha_{-}$ and $\beta_{+}=\beta_{-}=0$ while the Carr-Geman-Madan-Yor (CGMY) distributions \cite{carr2003stochastic}, also called Classical Tempered Stable Distribution\cite{rachev2011financial} is obtained by substituting  $\alpha= \alpha_{+}= \alpha_{-}$, $G=\lambda_{-}$, $M=\lambda_{+}$ and $Y=\beta_{+}=\beta_{-}$.\\
\noindent
The goal of the present paper is to investigate the self-decomposable distributions associated with  General tempered stable (GTS) distribution and provide a simulation technique to sample  the associated Ornstein-Uhlenbeck type processes.\\
There are different methods for simulating the Tempered Sable processes, but the choice usually depends on the structure of the L\'evy measures. For example, the CGMY process \cite{madan2006cgmy}, also called the Classical Tempered Stable process, can be simulated as a time-changed Brownian motion with drift. The method with the inverse of the L\'evy measure, in the series representations \cite{Rosinski2001}, was not suitable to implement in our case. The tail of  L\'evy measures does not have an explicit inverse.  The simulation method used in the paper is based on the Ornstein-Uhlenbeck type process, and the classic sampling method of inversion was applied to the random integral representation. The inverse of the cumulative probability function was provided by the composite FRFTs algorithm \cite{nzokem2023enhanced}.\\

\noindent
The remainder of the text is organized as follows: In section 2, we provide an overview of the framework for L\'evy process, Generalized Tempered Stable (GTS) Process, Ornstein-Uhlenbeck process, and self-decomposable distribution. In section 3, we investigate the background-driving L\'evy process (BDLP) of the GTS distribution and the self-decomposition distribution generated by the GTS distribution as a background-driving l\'evy process (BDLP). Section 4 presents the GTS Parameter Estimation and analyses the l\'evy density and characteristic functions derived in section 3. In section 4, we develop a simulation method and apply to sample the S\&P 500  Index and Bitcoin daily cumulative return process.
\newpage
\section {Preliminaries}
\noindent
\subsection{L\'evy Framework and Asset Pricing: Overview}
\noindent
Let $(\Omega, \scrF, \left\{ \scrF_{t} \right \}_{t \geq 0},  \mathbf{P})$ be a filtered probability space, with $\scrF =\vee_{t>0}\scrF_{t}$ and $\left\{ \scrF_{t} \right \}_{t \geq 0}$ is a filtration.  $\scrF_{t}$ is a $\sigma$-algebra included in $\scrF$ and for $\tau<t$, $\scrF_{\tau}\subseteq \scrF_{t}$.\\
\noindent
A stochastic process $L=\{L_{t}\}_{t\geq0}$ is a L\'evy process, if it has the following properties\\
(A1): $L_{0} = 0$ a.s;\\
(A2): $L_{t }$ has independent increments, that is,  for $0<t_{1} < t_{2} <\dots< t_{n}$, the random variables $L_{t_{1} } - L_{0} $, $L_{t_{2} } - L_{t_{1} }$, \dots, $L_{t_{n} } - L_{t_{n-1} }$  are independent;\\
(A3): $L_{t }$ has stationary increments, that is, 
 for any $t_{1} < t_{2} < +\infty$, the probability distribution  of $L_{t_{2} } - L_{t_{1} }$   depends only on $t_{2}  - t_{1} $ ;\\
(A4): $L_{t }$ is stochastically continuous: for any $t$ and $\epsilon>0$, $\lim_{s \to t} P\left(|L_{s} - L_{t}|>\epsilon \right)=0$;\\
(A5): $c\grave{a}dl\grave{a}g$ paths, that is, $t \mapsto L_{t}$ is a.s right continuous with left limits\\

\noindent
Given a L\'evy process $L=\{L_{t}\}_{t\geq0}$  on the filtered probability space $(\Omega, \scrF, \left\{ \scrF_{t} \right \}_{t \geq 0}, \mathbf{P})$, we define the asset value process $S=\{S_{t}\}_{t\geq0}$ as $S_{t}=S_{0}e^{L_{t}}$.\\

%\clearpage
 \begin{theorem}\label{lem1} (L\'evy \& Khintchine representation) \ \\
 Let $L=\{L_{t}\}_{t\geq0}$ be a L\'evy precess on $\mathbb{R}$ with characteristic triplet $(\gamma, \sigma^{2}, M)$. Then for any $\xi \in \mathbb{R}$ and $t \geq 0$, we have:
  \begin{equation}
 \begin{aligned}
E\left[e^{i L_{t}\xi}\right]&=e^{t\varphi(\xi)}\\
\varphi(\xi)= Log\left(Ee^{i L_{1}\xi}\right)&= i\gamma \xi - \frac{1}{2} \sigma^{2}\xi^{2} + \int_{-\infty}^{+\infty}\left(e^{i \xi y} -1 - y\xi 1_{|y|\leq 1}\right)M(dy)
\label {eq:l02}
  \end{aligned}
\end{equation}
\noindent 
Where $\gamma \in \mathbb{R}$,  $\sigma \geq 0$ is the Gaussian coefficient, $\varphi(\xi)$ is the characteristic exponent and $M$ is the L\'evy measure of L such that
  \begin{equation*}
 \begin{aligned}
 \int_{-\infty}^{+\infty}Min(1,|y|^{2})M(dy) <+\infty
\label {eq:l021}
  \end{aligned}
\end{equation*} 
\end{theorem}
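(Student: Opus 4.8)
The plan is to separate the claim into two layers: the exponential-in-$t$ structure of the characteristic function, which is a soft consequence of the defining axioms (A1)--(A5), and the explicit L\'evy--Khintchine form of the exponent $\varphi$, which carries the analytic weight and rests on the L\'evy--It\^o decomposition of the trajectories.

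First I would fix $\xi \in \mathbb{R}$ and set $\phi_t(\xi) = E[e^{i L_t \xi}]$. Writing $L_{t+s} = (L_{t+s} - L_s) + L_s$ and invoking independence of increments (A2) gives $\phi_{t+s}(\xi) = E[e^{i(L_{t+s}-L_s)\xi}]\,\phi_s(\xi)$, while stationarity (A3) identifies the law of $L_{t+s}-L_s$ with that of $L_t$; hence $\phi_{t+s}(\xi) = \phi_t(\xi)\,\phi_s(\xi)$. Since $\phi_0(\xi)=1$ by (A1) and $t \mapsto \phi_t(\xi)$ is continuous by stochastic continuity (A4), the only solution of this Cauchy functional equation is $\phi_t(\xi) = e^{t\varphi(\xi)}$ with $\varphi(\xi) = \mathrm{Log}\,\phi_1(\xi)$. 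This delivers the first display and reduces the theorem to computing $\varphi$.

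The substantive part is to establish the stated form of $\varphi$. I would build the jump random measure $N((0,t]\times A) = \#\{\,s \in (0,t] : \Delta L_s \in A\,\}$ for Borel sets $A$ bounded away from $0$, and use (A2)--(A5) to show that $N$ is a Poisson random measure on $(0,\infty)\times\mathbb{R}$ with intensity $dt\,M(dy)$, where $M$ is by definition the L\'evy measure of $L$. The large jumps $\{|y|>1\}$ then form a compound Poisson process with exponent $\int_{|y|>1}(e^{i\xi y}-1)\,M(dy)$. The small jumps $\{|y|\le 1\}$ need not be summable, so I would compensate them, taking the $L^2$-limit as $\varepsilon \downarrow 0$ of $\int_{\varepsilon<|y|\le 1} y\,(N - dt\,M)$; its exponent is $\int_{|y|\le 1}(e^{i\xi y}-1-i\xi y)\,M(dy)$, and the convergence of this integral is precisely what forces $\int \min(1,|y|^2)\,M(dy) < +\infty$. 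Subtracting all jumps leaves a continuous L\'evy process, which by the Gaussian characterization of continuous additive processes must be a Brownian motion with drift, contributing $i\gamma\xi - \tfrac12\sigma^2\xi^2$. Summing the three independent contributions yields $\varphi$ in the asserted form.

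The main obstacle is the jump analysis: verifying that $N$ is genuinely Poisson with intensity $dt\,M(dy)$, and controlling the divergent small-jump sum through compensation so that the integral of $e^{i\xi y}-1-i\xi y$ converges. This demands a martingale/$L^2$ argument for the small jumps together with an independence argument separating the continuous Gaussian component from the jump component. By comparison, the functional-equation step and the compound-Poisson computation for the large jumps are routine.
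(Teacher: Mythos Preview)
Your sketch is correct and follows the standard route (functional equation for $\phi_t$, then L\'evy--It\^o decomposition to identify $\varphi$). The paper itself does not prove this theorem at all---it simply states it and cites Applebaum, Sato, and Tankov for the proof---so there is nothing to compare against beyond noting that your outline is precisely the argument one finds in those references.
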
 

\noindent
For the theorem-proof, see \cite{applebaum2009levy,ken1999levy,tankov2003financial}\\

\noindent
The L\'evy process is completely determined by the L\'evy–Khintchine triplet  $(\gamma, \sigma^{2}, M)$. The triplet terms suggest that the L\'evy process has three independent components: a linear drift, a Brownian motion, and a L\'evy jump process. When the diffusion term $\sigma=0$, we have a  L\'evy jump process; in addition, if $\gamma=0$, we have a pure jump process.\\

\noindent
The L\'evy-Khintchine representation has a simpler expression when the sample paths of the L\'evy process have finite variation on every compact time interval almost surely. A L\'evy process has bounded variation\cite{Itkin2017} if and only if
\begin{equation}
\begin{aligned}
\sigma^{2}=0  \quad \quad \mbox{ and }  \quad \quad \int_{-\infty}^{+\infty} min(1,|x|)M(dx) <+ \infty  \label {eq:l283}
  \end{aligned}
\end{equation} 

\noindent
 With the assumption of L\'evy process with finite variation, the characteristic exponent $\varphi(\xi)$ in the theorem \ref{lem1} becomes
   \begin{equation}
 \begin{aligned}
\varphi(\xi)= Log\left(Ee^{i L_{1}\xi}\right)= i\Gamma \xi  + \int_{-\infty}^{+\infty}\left(e^{i \xi y} -1 \right)M(dy)
  \end{aligned}
\end{equation} 
With
\begin{equation*}
 \begin{aligned}
\sigma^{2}=0 \quad \quad \Gamma=\gamma + \int_{-1}^{1} x M(dx) 
  \end{aligned}
\end{equation*} 
 
\subsection { Generalized Tempered Stable (GTS) Process: Overview}
\noindent
The L\'evy measure of the Generalized Tempered Stable (GTS) distribution ($M(dy)$) is defined (\ref{eq:l23}) as a product of a tempering or tilting \cite{bianchi2009tempered,rosinski2007tempering} function ($q(y)$) (\ref{eq:l21}) and a L\'evy measure of the $\alpha$-stable distribution ($M_{stable}(dy)$) (\ref{eq:l22}).
 \begin{align}
q(y) &= e^{-\lambda_{+}y} \boldsymbol{1}_{y>0} + e^{-\lambda_{-}|y|} \boldsymbol{1}_{y<0} \label{eq:l21}\\
M_{stable}(dy) &=\left(\frac{\alpha_{+}}{y^{1+\beta_{+}}} \boldsymbol{1}_{y>0} + \frac{\alpha_{-}}{|y|^{1+\beta_{-}}} \boldsymbol{1}_{y<0}\right) dy \label{eq:l22}\\
M(dy) =q(y)M_{stable}(dy)&=\left(\frac{\alpha_{+}e^{-\lambda_{+}y}}{y^{1+\beta_{+}}} \boldsymbol{1}_{y>0} + \frac{\alpha_{-}e^{-\lambda_{-}|y|}}{|y|^{1+\beta_{-}}} \boldsymbol{1}_{y<0}\right) dy \label{eq:l23}
 \end{align}
\noindent
Where $0\leq \beta_{+}\leq 1$, $0\leq \beta_{-}\leq 1$, $\alpha_{+}\geq 0$, $\alpha_{-}\geq 0$, $\lambda_{+}\geq 0$ and $\lambda_{-}\geq 0$. \\

\noindent
The six parameters that appear have important interpretations. $\beta_{+}$ and $\beta_{-}$ are the indexes of stability bounded below by 0 and above by 2 \cite{borak2005stable}. They capture the peakedness of the distribution similarly to the $\beta$-stable distribution, but the distribution tails are tempered. If $\beta$ increases (decreases), then the peakedness decreases (increases). $\alpha_{+}$ and $\alpha_{-}$ are the scale parameters, also called the process intensity \cite{boyarchenko2002non}; they determine the arrival rate of jumps for a given size. $\lambda_{+}$ and $\lambda_{-}$ control the decay rate on the positive and negative tails. Additionally, $\lambda_{+}$ and $\lambda_{-}$ are also skewness parameters. If $\lambda_{+}>\lambda_{-}$ ($\lambda_{+}<\lambda_{-}$), then the distribution is skewed to the left (right), and if $\lambda_{+}=\lambda_{-}$, then it is symmetric \cite{rachev2011financial, fallahgoul2017quantile}. $\alpha$ and $\lambda$ are related to the degree of peakedness and thickness of the distribution. If $\alpha$ increases (decreases), the peakedness and the thickness decrease (increase). Similarly, If $\lambda$ increases (decreases), then the peakedness increases (decreases) and the thickness decreases (increases) \cite{bianchi2019handbook}.\\

\noindent
The GTS distribution can be denoted by $X\sim GTS(\textbf{$\beta_{+}$}, \textbf{$\beta_{-}$}, \textbf{$\alpha_{+}$},\textbf{$\alpha_{-}$}, \textbf{$\lambda_{+}$}, \textbf{$\lambda_{-}$})$ and $L_{1} =L^{+}_{1} - L^{-}_{1}$ with $L^{+}_{1} \geq 0$, $L^{-}_{1} \geq 0$. $L^{+}_{1}\sim TS(\textbf{$\beta_{+}$}, \textbf{$\alpha_{+}$},\textbf{$\lambda_{+}$})$ and $L^{-}_{1}\sim TS(\textbf{$\beta_{-}$}, \textbf{$\alpha_{-}$},\textbf{$\lambda_{-}$})$. \\

\noindent
The activity process of the GTS distribution can be studied from the integral (\ref{eq:l24}) of the L\'evy measure (\ref{eq:l23}). As shown in (\ref{eq:l24}), if $\beta_{+} < 0$, TS(\textbf{$\beta_{+}$}, \textbf{$\alpha_{+}$},\textbf{$\lambda_{+}$}) is of finite activity process and can be written as a compound Poisson. We have a similar pattern when $\beta_{-} < 0$. The interesting case is when $\beta_{+} \ge 0$. As shown in (\ref{eq:l24}), if $\beta_{+} \ge 0$, TS(\textbf{$\beta_{+}$}, \textbf{$\alpha_{+}$},\textbf{$\lambda_{+}$}) is of infinite activity process with infinite jumps in any given time interval. We have a similar pattern when $0 \le \beta_{-}$:
\begin{align}
 \int_{-\infty}^{+\infty} M(dy) =\begin{cases}
  +\infty  &\text{if }{\beta_{+}\ge 0\vee\beta_{-} \ge 0}   \\
  \alpha_{+}{\lambda_{+}}^{\beta_{+}}\Gamma(-\beta_{+},0) +  \alpha_{-}{\lambda_{-}}^{\beta_{-}}\Gamma(-\beta_{-},0) &\text{if }{\beta_{+} < 0\wedge\beta_{-} < 0} \end{cases} \label{eq:l24}
     \end{align}
where $\Gamma(s,x)=\int_{x}^{+\infty}y^{s-1}e^{-y}dy$ is the upper incomplete gamma function. In addition, the variation or smoothness of the process can be studied through the following integral:
 \begin{equation*}
 \begin{aligned}
&  \int_{-\infty}^{+\infty} min(1,|y|)M(dy)=   \int_{-\infty}^{-1} M(dy) +\int_{-1}^{0} |y|M(dy) + \int_{1}^{+\infty} M(dy) + \int_{0}^{1} |y|M(dy) \\
  &=\alpha_{-}\lambda_{-}^{\beta_{-}}\left[\Gamma(-\beta_{-},\lambda_{-}) + \lambda_{-}^{-1}\gamma(1-\beta_{-},\lambda_{-})\right] + \alpha_{+}\lambda_{+}^{\beta_{+}}\left[\Gamma(-\beta_{+},\lambda_{+}) + \lambda_{+}^{-1}\gamma(1-\beta_{+},\lambda_{+})\right]
 \end{aligned}
\end{equation*}
\noindent
where $\gamma(s,x)=\int_{0}^{x}y^{s-1}e^{-y}dy$ is lower incomplete gamma function.\\
We have
\begin{align}
  \int_{-\infty}^{+\infty} min(1,|y|)M(dy) <+ \infty  \hspace{5mm}
   \hbox{ if $0\leq \beta_{-}<1$ \& $0\leq \beta_{+}\leq 1$}\label{eq:l25}
\end{align}
\noindent
As shown in (\ref{eq:l25}), GTS(\textbf{$\beta_{+}$}, \textbf{$\beta_{-}$}, \textbf{$\alpha_{+}$},\textbf{$\alpha_{-}$}, \textbf{$\lambda_{+}$}, \textbf{$\lambda_{-}$}) is a finite variance process, which is not the case for the Brownian motion process.\\

\noindent
By adding the location parameter, we have:
 \begin{align}
L =\mu + L_{1}=\mu + L^{+}_{1} - L^{-}_{1} \quad \quad  L\sim GTS(\mu, \beta_{+}, \beta_{-}, \alpha_{+}, \alpha_{-},\lambda_{+}, \lambda_{-}) \label {eq:l26}
  \end{align}
The GTS distribution in (\ref{eq:l26}) is a seven-parameter family of infinitely divisible distribution, which covers several well-known distribution subclasses: the KoBol distribution \cite{boyarchenko2002non} is obtained by substituting $\beta=\beta_{+}=\beta_{-}$, the truncated L\'evy flight \cite{kuchler2013tempered} by substituting $\lambda=\lambda_{+}= \lambda_{-} $ and $\beta=\beta_{+}=\beta_{-}$, the bilateral Gamma distributions \cite{KUCHLER2008261} by substituting $\beta_{+}=\beta_{-}=0$, the Variance Gamma distributions \cite{madan1998variance, nzokem2022,nzokem_2021b,nzokem2021fitting, Nzokem_Montshiwa_2023} by substituting $\alpha= \alpha_{+}= \alpha_{-}$ and $\beta_{+}=\beta_{-}=0$ while the Carr-Geman-Madan-Yor (CGMY) distributions \cite{carr2003stochastic}, also called Classical Tempered Stable Distribution\cite{rachev2011financial} is obtained by substituting  $\alpha= \alpha_{+}= \alpha_{-}$, $G=\lambda_{-}$, $M=\lambda_{+}$ and $Y=\beta_{+}=\beta_{-}$.\\

\begin{theorem}\label{lem2} \ \\
Consider a variable $Y \sim GTS(\textbf{$\mu$}, \textbf{$\beta_{+}$}, \textbf{$\beta_{-}$}, \textbf{$\alpha_{+}$},\textbf{$\alpha_{-}$}, \textbf{$\lambda_{+}$}, \textbf{$\lambda_{-}$})$, the characteristic exponents can be written as
 \begin{align}
\Psi(\xi)&=Log\left(Ee^{i L\xi}\right)=\mu\xi i + \Psi^{+}(\xi) + \Psi^{-}(-\xi)\label {eq:l271}
 \end{align}
 Where 
  \begin{align*}
\Psi^{+}(\xi) &=Log\left(Ee^{i L^{+}_{1}\xi}\right)= \alpha_{+}\Gamma(-\beta_{+})\left((\lambda_{+} - i\xi)^{\beta_{+}} - {\lambda_{+}}^{\beta_{+}}\right) \\
\Psi^{-}(\xi) &=Log\left(Ee^{i L^{-}_{1}\xi}\right)=\alpha_{-}\Gamma(-\beta_{-})\left((\lambda_{-} - i\xi)^{\beta_{-}} - {\lambda_{-}}^{\beta_{-}}\right)
 \end{align*}
\end{theorem}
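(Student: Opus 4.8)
The plan is to reduce the computation of $\Psi(\xi)$ to two one-sided Laplace-type integrals and evaluate each against the tempered-stable L\'evy density. First I would use the decomposition $L=\mu+L^{+}_{1}-L^{-}_{1}$ together with the independence of $L^{+}_{1}$ and $L^{-}_{1}$: since the characteristic function then factorises, $Ee^{iL\xi}=e^{i\mu\xi}\,Ee^{iL^{+}_{1}\xi}\,Ee^{-iL^{-}_{1}\xi}$, and taking logarithms gives $\Psi(\xi)=i\mu\xi+\Psi^{+}(\xi)+\Psi^{-}(-\xi)$ with $\Psi^{\pm}(\xi)=\log Ee^{iL^{\pm}_{1}\xi}$. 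This isolates the positive and negative tempered-stable parts, so it suffices to establish the closed form for $\Psi^{+}$; the expression for $\Psi^{-}$ then follows verbatim after swapping the subscript $+$ for $-$.

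Next I would apply the L\'evy--Khintchine representation of Theorem \ref{lem1}. Because the GTS process has finite variation on the range $0\le\beta_{\pm}<1$ established in (\ref{eq:l25}), and because $L^{+}_{1}$ is a subordinator with no Gaussian part and vanishing drift in this regime, the characteristic exponent collapses to the finite-variation form
\begin{equation*}
\Psi^{+}(\xi)=\int_{0}^{+\infty}\bigl(e^{i\xi y}-1\bigr)\frac{\alpha_{+}e^{-\lambda_{+}y}}{y^{1+\beta_{+}}}\,dy
=\alpha_{+}\int_{0}^{+\infty}\Bigl(e^{-(\lambda_{+}-i\xi)y}-e^{-\lambda_{+}y}\Bigr)y^{-1-\beta_{+}}\,dy .
\end{equation*}
The core of the argument is therefore to show that, for $0<\beta<1$ and complex $s_{1},s_{2}$ with positive real part,
\begin{equation*}
\int_{0}^{+\infty}\bigl(e^{-s_{1}y}-e^{-s_{2}y}\bigr)y^{-1-\beta}\,dy=\Gamma(-\beta)\bigl(s_{1}^{\beta}-s_{2}^{\beta}\bigr),
\end{equation*}
after which $s_{1}=\lambda_{+}-i\xi$, $s_{2}=\lambda_{+}$ yields exactly the claimed formula. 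I would prove this identity by integration by parts, writing $y^{-1-\beta}\,dy=d\bigl(-\beta^{-1}y^{-\beta}\bigr)$; the boundary terms vanish precisely because $0<\beta<1$ (the factor $e^{-s_{1}y}-e^{-s_{2}y}$ is $O(y)$ at the origin and the exponentials kill the contribution at infinity), and what remains reduces to two standard Gamma integrals $\int_{0}^{\infty}y^{-\beta}e^{-sy}\,dy=\Gamma(1-\beta)s^{\beta-1}$. Collecting the two terms and using $\Gamma(1-\beta)=-\beta\,\Gamma(-\beta)$ produces the constant $\Gamma(-\beta)$ in front.

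The main obstacle is the analytic and convergence bookkeeping rather than the algebra. Individually the integrals $\int_{0}^{\infty}e^{-sy}y^{-1-\beta}\,dy$ diverge at the origin for $\beta\ge0$, so the identity only makes sense for the difference, and I must verify both its convergence and the vanishing of the integration-by-parts boundary terms uniformly on the strip $0<\beta<1$. A second delicate point is the complex exponent: the Gamma integral formula has to be extended from real $s>0$ to $s=\lambda_{+}-i\xi$ with $\Re s=\lambda_{+}>0$, which I would justify by analytic continuation in $s$ while fixing the principal branch of $s^{\beta}$. Finally, the degenerate endpoints merit a remark, since at $\beta=0$ (the bilateral-Gamma and Variance-Gamma reductions listed after (\ref{eq:l26})) the pole of $\Gamma(-\beta)$ cancels against the vanishing factor $s_{1}^{\beta}-s_{2}^{\beta}$, and the limit $\beta\to0$ recovers the logarithmic exponent $\alpha_{+}\log\!\bigl(\lambda_{+}/(\lambda_{+}-i\xi)\bigr)$, so the stated formula should be read through this limiting convention at the boundary.
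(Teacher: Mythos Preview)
Your argument is correct. The decomposition via independence of $L_{1}^{+}$ and $L_{1}^{-}$, the reduction to the finite-variation L\'evy--Khintchine form for the one-sided tempered stable subordinator, and the evaluation of the difference integral by integration by parts together with $\Gamma(1-\beta)=-\beta\,\Gamma(-\beta)$ are all sound; your remarks on the convergence of the boundary terms for $0<\beta<1$, on the analytic continuation to $\Re s>0$ with the principal branch of $s^{\beta}$, and on the $\beta\to 0$ limit are the right places to be careful and are handled correctly.

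As for comparison: the paper does not actually supply a proof of this theorem. Immediately after the statement it defers to the references \cite{nzokem2022fitting, nzokem2023european, nzokem2023bitcoin}, so there is no in-text argument to match yours against. Your self-contained derivation is therefore more than what the paper offers here, and the approach you take (direct integration against the tempered-stable L\'evy density, with an integration-by-parts reduction to the standard Gamma integral) is the classical one; it is consistent with how the paper handles closely related computations elsewhere, for instance the power-series manipulations and Gamma-function identities in the proofs of Theorems~\ref{lem4}--\ref{lem7}.
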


See \cite{nzokem2022fitting, nzokem2023european, nzokem2023bitcoin} for theorem \ref{lem2} proof. 

%\newpage

\subsection{Decomposable Laws: Overview}
\noindent
Consider a sequence $(X_{k} : k = 1, 2,...)$ of independent random variables, but not necessarily identically distributed;  and another sequence $(S_{n} : n = 1, 2,...)$ with $S_{n} = \sum_{k=0}^{n}X_{k}$. We assumed there exist a sequence $(c_{n} : n = 1, 2,...)$ and a scaling sequence $(b_{n} : n = 1, 2,...)$ such that  $\frac{S_{n} - c_{n}}{b_{n}}$ converges in distribution to some random variable $X$. In other words, the random variable $X$ distribution is the limiting distribution of a sequence of normalized sums of independent but not necessarily identically distributed random variables. The class of limit laws is known as L\'evy class L distributions or class L in the literature\cite{carr2007self}, and the random variable $X$ is said to belong to Class L. In the classical Central Limit Theorem (CLT), the asymptotically standard normal is of Class L. Likewise, the class of stable laws is of Class L. It can be described by four parameters:  a location parameter ($\mu$), a scale parameter ($\sigma > 0$), a stability index  ( $0 \leq \alpha \leq 2$) and a skewness parameter ($ -1 \leq \beta \leq 1$). The distribution of any random variable in the class L is infinitely divisible.\\

\noindent
 In addition, it is shown \cite{ken1999levy} that a Class L is also a class of self-decomposable probability distributions. The distribution of a random variable $X$ is  self-decomposable if for any constant $c$ ($ 0 < c < 1$ ), there exists an independent random variable $X_{c}$ such that
\begin{equation}
\begin{aligned}
X \overset{d}{=}  cX + X_{c}  \hspace{10mm}  \hbox{with $X$ and  $X_{c}$ are  independent variable} \label {eq:l281}  \end{aligned}
\end{equation} 
where $\overset{d}{=}$  means equality in distribution.\\
\noindent
Analytically,  a random variable $X$ with a characteristic function $\phi$ is self-decomposable if
\begin{equation}
\begin{aligned}
\forall (0 < c < 1) \,\exists \rho_{c} \, \forall (t \in \mathbb{R})  \quad \quad \phi(t)=\phi(ct)\rho_{c}(t) \label {eq:l282}  
\end{aligned}
\end{equation} 
where $\rho_{c}$ is also a characteristic function.\\

\noindent
Class L distributions is also described \cite{jurek1996series} as distributions of random integrals. Class L is represented as follows
\begin{equation}
\begin{aligned}
L=\left\{ \int_{0}^{+\infty} e^{-t} dY(s), \mbox{with $Y$ is a L\'evy Process and  $ E\left[\log(1+|Y(1)|)\right] \leq \infty $ } \right\} \label {eq:l283}
  \end{aligned}
\end{equation} 
where the L\'evy process $Y$ is called the background driving process of a self-composable distribution. In terms of characteristic functions, the random integral representation provides another characteristic of the Class L \cite{jurek1996series}.
\begin{equation}
\begin{aligned}
X \in   \mbox{Class L \quad if only if}  \quad \log(\phi(\xi))=\int_{0}^{\xi} \log(\varphi(u))\frac{du}{u} \label {eq:l284} 
 \end{aligned}
\end{equation} 
where $\phi(x)$ and $\varphi(x)$ are characteristic functions of $X$ and $Y (1)$\\

\noindent
The relationship between a self-decomposable variable $(X)$ and its background driving process can also be featured\cite{Jurek2023,jurek1996series} by the triplet in L\'evy-Khintchine formula (\ref{eq:l02}).  Let $\left[ A_{X}, \sigma^{2}_{X}, M_{X}\right]$ and  $\left[ A_{Y}, \sigma^{2}_{Y}, M_{Y}\right]$ are triplets in the L\'evy-Khintchine formula  for $X$ and $Y$ respectively. we have 
\begin{equation}
\begin{aligned}
M_{X}(A)&=\int_{0}^{+\infty}M_{Y}(e^{t}A)dt \quad  \mbox{ for borel subsets} \quad A \subset \mathbb{R} \setminus {\{0\}} \label {eq:l2851}\\
\sigma_{X}^{2}&=\frac{1}{2}\sigma_{Y}^{2}  \quad \quad A_{X}=A_{Y} + \int_{-\infty}^{+\infty} \left(\frac{x}{1+x^2} - arctan(x)\right )M_{Y}(dx) 
 \end{aligned}
\end{equation} 

\noindent
The class of self-decomposable probability distributions is large and includes among others\cite{jurek2022background}: Stable, Gamma, Log-Gamma, Log-Normal, Chi-Squared ($\chi^{2}$), Student-T, Log-Student-T, Fisher–Snedecor and Hyperbolic distributions.

\subsection{Ornstein-Uhlenbeck process: Overview} 
\noindent
The Ornstein-Uhlenbeck process is a diffusion process introduced by Ornstein and Uhlenbeck \cite{uhlenback1930theory} to model the stochastic behavior of the velocity of a particle undergoing Brownian motion. The Ornstein-Uhlenbeck diffusion $X=\{X(t)\}_{t\geq 0}$ is the solution of the Langevin Stochastic Differential Equation (SDE) (\ref{eq:l01})
\begin{align}
dX_{t}=-\lambda X_{t}dt + dB_{t} .
\label {eq:l01}
  \end{align}
where $\lambda >0$ and $B=\{B_{t},t\geq 0\}$ is a Brownian motion.\\

\noindent
The Ornstein-Uhlenbeck type process is used in finance to capture important distributional deviations from Gaussianity and to model dependence structures \cite{barndorff1998some, barndorff2000non, barndorff2001non, barndorff2003integrated}.  The extension of the Ornstein-Uhlenbeck processes is possible by replacing the Brownian motion in (\ref{eq:l01}) by the L\'evy process $\{L_{t}\}_{t\geq0}$, called the background driving L\'evy process (BDLP) corresponding to the process $\{X_{t}\}_{t\geq0}$. The SDE (\ref{eq:l01}) becomes
\begin{align} 
dX_{t}=-\lambda X_{t}dt + dL_{\lambda t}  \quad  \lambda >0.
\label {eq:l02}
  \end{align}

\noindent
$dL(\lambda t)$ is deliberately chosen \cite{barndorff2000non} so that the marginal distribution of X is unchanged, whatever the value of $\lambda$. The solution to the SDE (\ref{eq:l02})  is a stationary process $\{X_{t}\}_{t\geq0}$ such that $X_{t} \overset{d}{=} X$. $\{X_{t}\}_{t\geq0}$ is a stationary process of Ornstein-Uhlenbeck type developed by Barndorff-Nielsen and Shephard \cite{barndorff2000econometric,barndorff2003integrated,barndorff2001modelling}.  $\{X_{t}\}_{t\geq0}$ moves up entirely
by jumps and then tails off exponentially \cite{barndorff2000non}.
\newpage
\begin{theorem} \label{lem2} \ \\
Let $\phi$ be the characteristic function of a random variable X . If X is self-decomposable then there exists a stationary stochastic process 
$\left\{X(t)\right\}_{t>0}$ and a background driving L\'evy process (BDLP) $\left\{L(t)\right\}_{t>0}$ independent of $X(0)$ , such that $X(t)\overset{d}{=}X$ and 
\begin{align}
X(t)=e^{-\lambda t} X(0) + \int_{0}^{t}e^{-\lambda (t-s)} dL_{\lambda s}    \quad  \lambda >0. 
\label {eq:l03}
  \end{align}
Conversely, if $\left\{X(t)\right\}_{t>0}$ is a stationary stochastic process and  $\left\{L(t)\right\}_{t>0}$ is a L\'evy process, independent of $X(0)$, such that $\left\{X(t)\right\}_{t>0}$ and   $\left\{L(t)\right\}_{t>0}$ satisfy the equation (\ref{eq:l02}) for all  $\lambda >0$ then $X(t)$  is
self-decomposable.
\end{theorem}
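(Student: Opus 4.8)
The plan is to prove both implications by leveraging the random-integral and characteristic-function characterizations of Class L recalled in (\ref{eq:l283})--(\ref{eq:l284}). For the forward direction, I would first solve the Langevin-type SDE (\ref{eq:l02}) by the integrating-factor method: setting $Y_t=e^{\lambda t}X_t$ and applying the stochastic product rule gives $dY_t=e^{\lambda t}\,dL_{\lambda t}$, hence
\begin{equation*}
X(t)=e^{-\lambda t}X(0)+\int_0^t e^{-\lambda(t-s)}\,dL_{\lambda s},
\end{equation*}
which is exactly (\ref{eq:l03}). Writing $\psi(\xi)=\log\phi(\xi)$ for the characteristic exponent of $X$ and $\kappa$ for that of the BDLP $L_1$, and assuming $X(0)\overset{d}{=}X$ is independent of the driving noise, the independence of $X(0)$ from the increments of $L$ factorizes $E[e^{i\xi X(t)}]$ into $\phi(e^{-\lambda t}\xi)$ times the characteristic function of the deterministic-integrand stochastic integral. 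The latter I would evaluate by the standard L\'evy stochastic-integral formula together with the substitutions $u=\lambda s$ and then $v=e^{-\lambda t}e^{u}$, obtaining $\exp\!\big(\int_{e^{-\lambda t}}^{1}\kappa(\xi v)\,\tfrac{dv}{v}\big)$.

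Stationarity $X(t)\overset{d}{=}X$ then reduces to the identity $\psi(\xi)-\psi(c\xi)=\int_c^1\kappa(\xi v)\,\tfrac{dv}{v}$ with $c=e^{-\lambda t}\in(0,1)$, which follows from the Class L representation (\ref{eq:l284}) by writing $\psi(\xi)-\psi(c\xi)=\int_{c\xi}^{\xi}\kappa(u)\tfrac{du}{u}$ and substituting $u=\xi v$. The converse I would handle directly: given a stationary $\{X(t)\}$ and a L\'evy process $\{L(t)\}$ independent of $X(0)$ satisfying (\ref{eq:l02}), the same integrating-factor computation yields (\ref{eq:l03}); stationarity gives $X(t)\overset{d}{=}X(0)\overset{d}{=}X$, while the integral term depends only on the post-zero increments of $L$ and is therefore independent of $X(0)$. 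Setting $c=e^{-\lambda t}$ and $X_c=\int_0^t e^{-\lambda(t-s)}\,dL_{\lambda s}$ gives $X\overset{d}{=}cX+X_c$ with $X_c$ independent of $X$; since $\lambda>0$ and $t>0$ are arbitrary, $c$ ranges over all of $(0,1)$, so this is precisely the self-decomposability criterion (\ref{eq:l281}).

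The main obstacle is the existence half of the forward direction: one must exhibit a genuine BDLP whose characteristic exponent is $\kappa=\log\varphi$ from (\ref{eq:l284}), i.e.\ verify that the measure $M_Y$ obtained by inverting the relation $M_X(A)=\int_0^\infty M_Y(e^tA)\,dt$ in (\ref{eq:l2851}) is a bona fide L\'evy measure. This rests on the fact that a self-decomposable law has L\'evy measure of the form $M_X(dx)=k(x)|x|^{-1}\,dx$ with $k$ increasing on $(-\infty,0)$ and decreasing on $(0,\infty)$; the monotonicity of $k$ is exactly what makes the differentiated density $M_Y$ nonnegative and integrable against $\min(1,|x|^2)$, and the log-moment condition in (\ref{eq:l283}) is what guarantees convergence of the defining stochastic integral. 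The remaining point --- upgrading equality of one-dimensional marginals to stationarity of the whole process --- follows from the time-homogeneity of the construction and the Markov property of the OU-type dynamics.
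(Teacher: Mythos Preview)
The paper does not actually prove this theorem: immediately after the statement it writes ``For the theorem-proof, see \cite{barndorff2001modelling}'' and moves on. So there is no in-paper argument against which to compare your proposal.

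That said, your outline is the standard route taken in the cited literature (Barndorff-Nielsen--Shephard, and Sato's monograph): solve (\ref{eq:l02}) by the integrating factor to obtain (\ref{eq:l03}); compute the characteristic function of the stochastic integral via the deterministic-integrand formula for L\'evy integrals; and match the result against the Class~L identity (\ref{eq:l284}) to force stationarity. You have correctly isolated the only genuinely nontrivial step in the forward direction --- the \emph{existence} of a BDLP with the required exponent --- and your sketch (monotone $k$-function structure of self-decomposable L\'evy measures giving nonnegativity of $M_Y$, plus the log-moment condition in (\ref{eq:l283}) for convergence of the integral) is exactly how the literature handles it. The converse is also correct; one small simplification is that a single fixed $\lambda>0$ already suffices, since $c=e^{-\lambda t}$ sweeps out all of $(0,1)$ as $t$ ranges over $(0,\infty)$. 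Your final remark about upgrading marginal stationarity to process stationarity via the Markov/semigroup structure is the right closing observation, though in a full write-up it would merit an explicit line.
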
 

For the theorem-proof, see \cite{barndorff2001modelling}
%\newpage
\section { Self-decomposability of GTS distribution }

\subsection{ Background Driving L\'evy Process (BDLP) of the GTS distribution}

\begin{theorem}\label{lem4}  \ \\
The Generalized Tempered Stable (GTS) distribution is self-decomposable, and its background driving L\'evy process $Y$ is the compound Poisson process with Poisson exponent.
  \begin{equation}
 \begin{aligned}
N(dx)=\left[\alpha_{+}\frac{\beta_{+} + \lambda_{+}x}{x^{1+\beta_{+}}}e^{-\lambda_{+}x} \boldsymbol{1}_{x>0} \right] dx  +  \left[\alpha_{-}\frac{\beta_{-} + \lambda_{-}|x|}{|x|^{1+\beta_{-}}} e^{-\lambda_{-}|x|}\boldsymbol{1}_{x<0}\right] dx \label{eq:l23bis}
  \end{aligned}
\end{equation} 
\end{theorem}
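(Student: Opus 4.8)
The plan is to work directly from the L\'evy-measure relationship \eqref{eq:l2851} linking a self-decomposable law $X$ to its BDLP $Y$, namely $M_X(A)=\int_0^\infty M_Y(e^t A)\,dt$, and to read off $M_Y=N$ from the GTS measure $M_X=M$ of \eqref{eq:l23}. First I would write the GTS L\'evy density in canonical self-decomposable form $u_X(x)=k(x)/|x|$, with $k(x)=\alpha_+x^{-\beta_+}e^{-\lambda_+x}$ on $x>0$ and $k(x)=\alpha_-|x|^{-\beta_-}e^{-\lambda_-|x|}$ on $x<0$. Self-decomposability then reduces to the monotonicity criterion of Sato \cite{ken1999levy}: $k$ must be nonincreasing on $(0,\infty)$ and nondecreasing on $(-\infty,0)$. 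A one-line derivative check, $k'(x)=-\alpha_+(\beta_++\lambda_+x)\,x^{-\beta_+-1}e^{-\lambda_+x}\le 0$ for $x>0$ together with the mirror inequality for $x<0$, settles this, since all six parameters are nonnegative.

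For the BDLP I would specialize \eqref{eq:l2851} to the half-line tails. Taking $A=(x,\infty)$ and substituting $s=e^t x$ turns the relation into $\overline{M}_X(x)=\int_x^\infty \overline{M}_Y(s)\,\frac{ds}{s}$, where $\overline{M}(\cdot)$ denotes the upper tail. Differentiating once gives $\overline{M}_Y(x)=-x\,\overline{M}_X'(x)=x\,u_X(x)=k(x)$, and differentiating a second time yields the BDLP density $u_Y(x)=-\bigl(u_X(x)+x\,u_X'(x)\bigr)=-k'(x)$. The sign computation of the previous step is exactly what guarantees $u_Y\ge 0$, so the measure produced is automatically a legitimate L\'evy measure, which simultaneously reconfirms self-decomposability because exhibiting such a $Y$ is equivalent to it. Substituting $k$ back in gives $u_Y(x)=\alpha_+(\beta_++\lambda_+x)\,x^{-\beta_+-1}e^{-\lambda_+x}$ on $x>0$, precisely the positive part of $N(dx)$; the negative half-line follows identically with the minus-parameters, producing $N$ exactly as stated.

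The step I expect to be delicate is the final \emph{compound Poisson} assertion, which concerns the total mass $\int_{\mathbb{R}}N(dx)$ rather than the functional form of $N$. Since $\overline{M}_Y(x)=k(x)$, the activity of $Y$ is governed by $\lim_{x\to 0^+}k(x)=\lim_{x\to 0^+}\alpha_+x^{-\beta_+}e^{-\lambda_+x}$, which is finite exactly when $\beta_+=0$; in that case $N$ integrates to the finite rate $\alpha_+$ on the positive side (and $\beta_-=0$ gives rate $\alpha_-$), whereas for $\beta_+>0$ the term $\alpha_+\beta_+x^{-1-\beta_+}$ forces $\int_{0}N(dx)$ to diverge and $Y$ has infinite activity. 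The genuine compound-Poisson reading is therefore literally correct only in the bilateral-Gamma regime $\beta_+=\beta_-=0$; for general GTS one should either read ``compound Poisson with exponent $N$'' loosely, as the pure-jump process whose L\'evy measure is $N$, or restrict the claim, and I would make this boundary explicit. As an independent cross-check I would re-derive $N$ from the Class-L identity \eqref{eq:l284} via $\log\varphi_Y(\xi)=\xi\,\frac{d}{d\xi}\log\phi_X(\xi)=\xi\,\Psi'(\xi)$, with $\Psi$ the GTS characteristic exponent of Theorem \ref{lem2}, and match the resulting exponent to the L\'evy--Khintchine form of $N$.
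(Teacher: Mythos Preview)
Your derivation of $N$ is correct and tracks the paper closely at the core step: both specialize \eqref{eq:l2851} to half-line tails $A=(x,\infty)$, obtain $\overline{M}_Y(x)=k(x)=\alpha_{+}x^{-\beta_{+}}e^{-\lambda_{+}x}$, and differentiate once more to get the density of $N$. Where you diverge is in establishing self-decomposability. The paper does not invoke Sato's monotonicity criterion; instead it verifies the Class-L integral identity \eqref{eq:l284} directly, carrying out the change of variables $v=ye^{t}$, $u=e^{-t}$ to show $\log\phi^{+}(\xi)=\int_{0}^{\xi}\log\varphi^{+}(x)\,x^{-1}dx$ explicitly. Your route is shorter and makes the nonnegativity of $N$ transparent (it is literally $-k'$), at the price of importing an external characterization; the paper's route is self-contained but longer. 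Your scrutiny of the ``compound Poisson'' wording is sharper than the paper, which simply asserts it without checking $\int N(dx)$: as you correctly observe, the mass is finite only when $\beta_{+}=\beta_{-}=0$ (the bilateral-Gamma case), and for $\beta_{\pm}>0$ the BDLP has infinite activity. Finally, the cross-check you propose via $\log\varphi_Y(\xi)=\xi\,\Psi'(\xi)$ is precisely what the paper carries out in its subsequent theorem on the characteristic exponent of the BDLP.
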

\begin{proof} \ \\
\noindent
Let  $\left[a_{\phi^{+}}, \sigma^{2}_{\phi^{+}}, M_{\phi^{+}}\right]$, the  L\'evy–Khintchine triplet of the Tempered stable distribution TS(\textbf{$\beta_{+}$}, \textbf{$\alpha_{+}$},\textbf{$\lambda_{+}$}) and $\left[b_{\varphi^{+}}, s^{2}_{\varphi^{+}}, N_{\varphi^{+}} \right]$, the  L\'evy–Khintchine triplet of the associated distribution.\\
The property (\ref{eq:l2851}) is applied by choosing $A=(y,+\infty)$
\begin{equation}
 \begin{aligned}
M_{\phi^{+}}((y,+\infty))=\int_{y}^{+\infty}\frac{\alpha_{+}}{x^{1+\beta_{+}}}e^{-\lambda_{+}x}dx=\int_{0}^{+\infty}N_{\varphi^{+}}(( e^{t}y,+\infty))dt =\int_{y}^{+\infty}\frac{N_{\varphi^{+}}((u,+\infty))}{u}du \label{eq:l2852}   
\end{aligned}
\end{equation} 
\noindent
In order to have the relation (\ref{eq:l2852}), we need to have:
\begin{equation*}
 \begin{aligned}
N_{\varphi^{+}}((u,+\infty))=\frac{\alpha_{+}}{u^{\beta_{+}}}e^{-\lambda_{+}u} 
\end{aligned}
\end{equation*} 
\noindent
The L\'evy density becomes
\begin{equation}
 \begin{aligned}
 N_{\varphi^{+}}(du)= - \frac{dN_{\varphi^{+}}((u,+\infty))}{du}du= \alpha_{+}(\lambda_{+} + \beta_{+}u^{-1})u^{-\beta_{+}}e^{-\lambda_{+}u} du
\end{aligned}
\end{equation} 
\noindent
The derivation of the relation (\ref{eq:l2852}) provides the relation between the L\'evy densities $M_{\phi^{+}}(dy)$ and $N_{\varphi^{+}}(dv)$ as follows.
\begin{equation}
 \begin{aligned}
M_{\phi^{+}}(dy)&=\int_{0}^{+\infty} N_{\varphi^{+}}( e^{t}dy)dt\\
&=\int_{0}^{+\infty}\alpha_{+}(\lambda_{+} + \beta_{+} e^{-t}y^{-1}) e^{(1-\beta_{+})t}y^{-\beta_{+}}e^{-\lambda_{+}y e^{t}}dt dy 
\end{aligned}
\end{equation} 
\noindent
We assume $\phi^{+}(\xi)$ is the characteristic function of the Tempered stable distribution TS(\textbf{$\beta_{+}$}, \textbf{$\alpha_{+}$},\textbf{$\lambda_{+}$}). 
   \begin{equation*}
 \begin{aligned}
\log(\phi^{+}(\xi))&=\int_{0}^{+\infty}(e^{iy\xi}-1)M_{\phi^{+}}(dy)\\
&=\int_{0}^{+\infty}\int_{0}^{+\infty}(e^{iy\xi}-1)\alpha_{+}(\lambda_{+} + \beta_{+} e^{-t}y^{-1}) e^{(1-\beta_{+})t}y^{-\beta_{+}}e^{-\lambda_{+}y e^{t}}dt dy \\
&=\int_{0}^{+\infty}\int_{0}^{+\infty}(e^{i\xi v e^{-t}}-1)\alpha_{+}(\lambda_{+} + \beta_{+}v^{-1})v^{-\beta_{+}}e^{-\lambda_{+}v}dt dv  \quad v=y e^{t}\\
&=\int_{0}^{1}\int_{0}^{+\infty}(e^{i\xi v u}-1)\alpha_{+}(\lambda_{+} + \beta_{+}v^{-1})v^{-\beta_{+}}e^{-\lambda_{+}v}dv\frac{du}{u}  \quad u= e^{t}\\
&=\int_{0}^{1}\log(\varphi^{+}(\xi u))\frac{du}{u}  =\int_{0}^{\xi}\log(\varphi^{+}(x))\frac{dx}{x} \quad \quad x=\xi u
\end{aligned}
\end{equation*} 
We have 
% \begin{equation}
 \begin{align}
\Psi^{+}(\xi)=\log(\phi^{+}(\xi))=\int_{0}^{\xi}\log(\varphi^{+}(x))\frac{dx}{x} \label{eq:l08a}\\
  \log(\varphi^{+}(x))=\int_{0}^{+\infty}(e^{iv x}-1) N_{\varphi^{+}}(dv) \quad \quad N_{\varphi^{+}}(dx)&=\alpha_{+}\frac{\beta_{+} + \lambda_{+}x}{x^{1+\beta_{+}}} e^{-\lambda_{+}x} dx \label{eq:l08b}
 \end{align}
%\end{equation} 
Following the same procedure on the left side, the Tempered stable distribution TS(\textbf{$\beta_{-}$}, \textbf{$\alpha_{-}$},\textbf{$\lambda_{-}$}), we have
 \begin{equation*}
 \begin{aligned}
\Psi^{-}(\xi)=\log(\phi^{-}(\xi))=\int_{0}^{\xi}\log(\varphi^{-}(x))\frac{dx}{x} \\
\log(\varphi^{-}(x))=\int_{0}^{+\infty}(e^{iv x}-1)N_{\varphi^{-}}(dv) \quad & \quad N_{\varphi^{-}}(dx)&=\alpha_{-}\frac{\beta_{-} + \lambda_{-}x}{x^{1+\beta_{-}}} e^{-\lambda_{-}x} dx\\
 \end{aligned}
\end{equation*} 
By putting the left and the right sides of the Tempered stable distribution together, we have
 \begin{equation}
 \begin{aligned}
 \log(\varphi^{+}(x)) +  \log(\varphi^{-}(-x))&= \int_{-\infty}^{+\infty}(e^{iv x}-1) N_{\varphi}(dx)\\
 \log(\phi^{+}(\xi)) +  \log(\phi^{-}(-\xi))&=\int_{0}^{\xi}( \log(\varphi^{+}(x)) +  \log(\varphi^{-}(-x))) \frac{dx}{x}\\
 &=\int_{0}^{\xi}\int_{-\infty}^{+\infty}(e^{iv x}-1) N_{\varphi}(dx) \frac{dx}{x} \label{eq:l271}
 \end{aligned}
\end{equation} 
Where
 \begin{equation*}
 \begin{aligned}
  N_{\varphi}(dx)=\left[\alpha_{+}\frac{\beta_{+} + \lambda_{+}x}{x^{1+\beta_{+}}} e^{-\lambda_{+}x} \boldsymbol{1}_{x>0} \right]dx + \left[\alpha_{-}\frac{\beta_{-} + \lambda_{-}|x|}{|x|^{1+\beta_{-}}}  e^{-\lambda_{-}|x|}\boldsymbol{1}_{x<0}\right] dx
 \end{aligned}
\end{equation*} 
\noindent
We have the characteristic exponent (\ref{eq:l271}) of  the Generalized Tempered Stable GTS(\textbf{$\mu$}, \textbf{$\beta_{+}$}, \textbf{$\beta_{-}$}, \textbf{$\alpha_{+}$},\textbf{$\alpha_{-}$}, \textbf{$\lambda_{+}$}, \textbf{$\lambda_{-}$})
  \begin{equation}
 \begin{aligned}
\Psi(\xi)=&Log\left(Ee^{i L\xi}\right)=\mu\xi i +   \log(\phi^{+}(\xi)) +  \log(\phi^{-}(-\xi))\\
&=\int_{0}^{\xi}\left( \mu x i + \int_{-\infty}^{+\infty}(e^{iv x}-1) N_{\varphi}(dx)\right)\frac{dx}{x}= \int_{0}^{\xi} \log(\varphi(x)\frac{dx}{x}\label{eq:l09a} 
\end{aligned}
\end{equation}
Where
  \begin{equation*}
 \begin{aligned}
\log(\varphi(x)) =\mu x i + \int_{-\infty}^{+\infty}(e^{iv x}-1) N_{\varphi}(dx)
\end{aligned}
\end{equation*}
 
\noindent 
We conclude from (\ref{eq:l09a}) that the Generalized Tempered Stable GTS is self-decomposable. 
\end{proof} 

%\newpage
\begin{theorem} \label{lem2} (Characteristic exponents of BDL process of the GTS distribution) \ \\
 \noindent
 The Generalized Tempered Stable (GTS) distribution is self-decomposable, and its background driving L\'evy process $Y$ has the following characteristic exponent.
   \begin{equation}
 \begin{aligned}
\log(\varphi(y))&=  \mu i y+ \alpha_{+}\Gamma(1 - \beta_{+})\frac{i y}{ (\lambda_{+} - i y)^{1- \beta_{+}}} + \alpha_{-}\Gamma(1 - \beta_{-})\frac{- i y}{ (\lambda_{-} + i y)^{1- \beta_{-}}}  \end{aligned}
\end{equation} 
\end{theorem}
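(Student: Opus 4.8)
The plan is to read off the characteristic exponent $\log(\varphi(y))$ directly from the structure already exposed in the proof of Theorem \ref{lem4}, where the background driving density $N_{\varphi}$ and the random-integral identity $\log(\phi^{+}(\xi)) = \int_{0}^{\xi}\log(\varphi^{+}(x))\frac{dx}{x}$ of (\ref{eq:l08a}) were established. Rather than computing the jump integral of $N_{\varphi}$ from scratch, I would differentiate this identity: applying the fundamental theorem of calculus to (\ref{eq:l08a}) and its left-hand analogue gives the pointwise relation $\log(\varphi^{+}(\xi)) = \xi\,\tfrac{d}{d\xi}\Psi^{+}(\xi)$, which reduces the whole problem to differentiating a closed form that is already available.

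Concretely, I would insert the closed form $\Psi^{+}(\xi) = \alpha_{+}\Gamma(-\beta_{+})\bigl((\lambda_{+}-i\xi)^{\beta_{+}} - \lambda_{+}^{\beta_{+}}\bigr)$ from Theorem \ref{lem2} (the GTS characteristic-exponent theorem) and differentiate once, obtaining $\tfrac{d}{d\xi}\Psi^{+}(\xi) = -\,i\,\alpha_{+}\beta_{+}\Gamma(-\beta_{+})(\lambda_{+}-i\xi)^{\beta_{+}-1}$. Multiplying by $\xi$ and applying the Gamma recursion $\Gamma(1-\beta_{+}) = -\beta_{+}\Gamma(-\beta_{+})$ then collapses $\alpha_{+}\beta_{+}\Gamma(-\beta_{+})$ into $-\alpha_{+}\Gamma(1-\beta_{+})$ and yields $\log(\varphi^{+}(\xi)) = \alpha_{+}\Gamma(1-\beta_{+})\frac{i\xi}{(\lambda_{+}-i\xi)^{1-\beta_{+}}}$. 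Running the identical argument on the negative half for $\log(\varphi^{-}(-\xi))$ and adding the drift term $\mu i y$ coming from (\ref{eq:l09a}) assembles the three-term formula in the statement.

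As an independent check, and to keep the argument self-contained, I would also evaluate $\int_{0}^{+\infty}(e^{ivy}-1)N_{\varphi^{+}}(dv)$ directly. The useful observation here is that the jump density is a total derivative, $\frac{\beta_{+}+\lambda_{+}v}{v^{1+\beta_{+}}}e^{-\lambda_{+}v} = -\frac{d}{dv}\bigl(v^{-\beta_{+}}e^{-\lambda_{+}v}\bigr)$, so an integration by parts trades the non-integrable factor $v^{-1-\beta_{+}}$ for the convergent Gamma integral $iy\int_{0}^{+\infty}v^{-\beta_{+}}e^{-(\lambda_{+}-iy)v}\,dv = iy\,\Gamma(1-\beta_{+})(\lambda_{+}-iy)^{\beta_{+}-1}$, reproducing the same expression.

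I expect the only real obstacle to be bookkeeping rather than substance. In the integration-by-parts route the delicate point is showing the boundary term at the origin vanishes, and this is exactly where the hypothesis $0\le\beta_{+}<1$ is used, since $(e^{ivy}-1)v^{-\beta_{+}}\sim iy\,v^{1-\beta_{+}}\to 0$ as $v\to 0^{+}$ precisely when $\beta_{+}<1$, while $\lambda_{+}>0$ controls the endpoint at infinity. In the differentiation route one must instead note that $\beta_{+}\Gamma(-\beta_{+})$, and hence the whole expression, is to be read by analytic continuation at the boundary value $\beta_{+}=0$, where it correctly degenerates to the Gamma-type exponent $\alpha_{+}i\xi/(\lambda_{+}-i\xi)$.
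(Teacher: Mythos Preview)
Your proposal is correct, and both routes you sketch work. However, your approach is genuinely different from the paper's. The paper evaluates $\log(\varphi^{+}(x))=\int_{0}^{\infty}(e^{ivx}-1)N_{\varphi^{+}}(dv)$ head-on by Taylor-expanding $e^{ivx}-1$, integrating each term against the BDLP density to obtain Gamma-function ratios, and then resumming the resulting power series via Newton's generalized binomial theorem. You instead exploit the structural identity $\log(\varphi^{+}(\xi))=\xi\,\tfrac{d}{d\xi}\Psi^{+}(\xi)$ obtained by differentiating (\ref{eq:l08a}), which reduces the computation to a single derivative of the closed form for $\Psi^{+}$ already supplied by the GTS characteristic-exponent theorem, followed by the Gamma recursion $\Gamma(1-\beta_{+})=-\beta_{+}\Gamma(-\beta_{+})$. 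Your integration-by-parts alternative, based on recognizing $N_{\varphi^{+}}(dv)$ as $-\alpha_{+}\,d(v^{-\beta_{+}}e^{-\lambda_{+}v})$, is likewise distinct from the paper's series manipulation. Both of your arguments are shorter and make the relation between the GTS exponent and its BDLP exponent more transparent; the paper's series approach is more self-contained in that it rederives the answer from the L\'evy density alone, without invoking the earlier closed form for $\Psi^{+}$.
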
 

\begin{proof} \ \\
\noindent
From (\ref{eq:l08a}),  we have :%
 \begin{equation}
 \begin{aligned}
 \log(\varphi^{+}(x))&=\int_{0}^{+\infty}(e^{iv x}-1)\alpha_{+}(\lambda_{+} + \beta_{+}v^{-1})v^{-\beta_{+}}e^{-\lambda_{+}v}dv \\
 &=\sum_{k=1}^{\infty}\frac{(ix)^{k}}{k!}\alpha_{+}\left(\lambda_{+} \int_{0}^{+\infty}v^{k - \beta_{+}}e^{-\lambda_{+}v}dv + \beta_{+}\int_{0}^{+\infty}v^{-1-\beta_{+}}e^{-\lambda_{+}v}dv\right)\\
  &=\sum_{k=1}^{\infty}\frac{(ix)^{k}}{k!}\alpha_{+}\left(\lambda_{+}\frac{\Gamma(k+1-\beta_{+})}{\lambda_{+}^{k+1-\beta_{+}}}  + \beta_{+}\frac{\Gamma(k - \beta_{+})}{\lambda_{+}^{k - \beta_{+}}}\right)\\
 &=\alpha_{+}\lambda_{+}^{\beta_{+}}\Gamma(1-\beta_{+})\left(\sum_{k=1}^{\infty}\frac{\Gamma(k+1-\beta_{+})}{\Gamma(1-\beta_{+})k!}(\frac{ix}{\lambda_{+}})^{k}  - \sum_{k=1}^{\infty}\frac{\Gamma(k-\beta_{+})}{\Gamma(-\beta_{+})k!}(\frac{ix}{\lambda_{+}})^{k} \right)\\
&=\alpha_{+}\lambda_{+}^{\beta_{+}}\Gamma(1-\beta_{+})\left(\sum_{k=1}^{\infty}{\beta_{+} -1 \choose k}(-\frac{ix}{\lambda_{+}})^{k}  - \sum_{k=1}^{\infty}{\beta_{+} \choose k}(\frac{ix}{\lambda_{+}})^{k} \right)\\
&=\alpha_{+}\Gamma(1-\beta_{+})\left(\lambda_{+}(\lambda_{+} - ix)^{\beta_{+} -1}  - (\lambda_{+} - ix)^{\beta_{+}}  \right)\\
&=\alpha_{+}\Gamma(1-\beta_{+})\frac{ix}{(\lambda_{+} - ix)^{1 - \beta_{+}}}
  \end{aligned}
\end{equation}
Similarly, we have 
\begin{equation*}
 \begin{aligned}
\log(\varphi^{-}(x))&=\int_{0}^{+\infty}(e^{iv x}-1)\alpha_{-}(\lambda_{-} + \beta_{-}v^{-1})v^{-\beta_{-}}e^{-\lambda_{-}v}dv =\alpha_{-}\Gamma(1-\beta_{-})\frac{ix}{(\lambda_{-} - ix)^{1 - \beta_{-}}} 
\end{aligned}
\end{equation*} 
And
  \begin{equation*}
 \begin{aligned}
  \log(\varphi(x))&= \log(\varphi^{+}(x)) + \log(\varphi^{-}(-x))\\
 &= \alpha_{+}\Gamma(1-\beta_{+})\frac{ix}{(\lambda_{+} - ix)^{1 - \beta_{+}}} + \alpha_{-}\Gamma(1-\beta_{-})\frac{-ix}{(\lambda_{-} + ix)^{1 - \beta_{-}}}
\end{aligned}
\end{equation*} 
\end{proof} 

%\newpage

\subsection{ GTS distribution as  Background Driving L\'evy Process (BDLP)}

\begin{theorem}\label{lem6}  \ \\
The Generalized Tempered Stable (GTS) distribution is the Background Driving L\'evy Process (BDLP) of a self-decomposable random variable $X$. Then the self-decomposable random variable $X$ has the following  L\'evy–Khintchine triplet $\left[ \mu, 0, U \right]$.
\begin{equation}
\begin{aligned}
U(dx)=\left[\alpha_{+}\lambda_{+}^{\beta_{+}}\frac{\Gamma(-\beta_{+},\lambda_{+}x)}{x}\boldsymbol{1}_{x>0}\right]dx   + \left[\alpha_{-}\lambda_{-}^{\beta_{-}}\frac{\Gamma(-\beta_{-},\lambda_{-}x)}{x} \boldsymbol{1}_{x<0} \right]dx  \label {eq:lem64}
\end{aligned}
\end{equation}
 \end{theorem}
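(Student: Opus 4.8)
The plan is to reverse the computation of Theorem~\ref{lem4}. There the GTS measure was obtained by \emph{differentiating} a self-decomposable tail; here the GTS plays the role of the background driving process $Y$, so I would instead \emph{integrate} its L\'evy measure (\ref{eq:l23}) to recover the L\'evy measure $U=M_X$ of the self-decomposable variable $X$ it generates. The governing identity is the triplet relation (\ref{eq:l2851}), namely $M_X(A)=\int_0^{+\infty}M_Y(e^{t}A)\,dt$, with $M_Y$ set equal to the GTS measure. Because the GTS distribution has exponentially decaying tails it has finite log-moments, so it is an admissible background driving process in the sense of (\ref{eq:l283}); it carries no Gaussian part, which gives the $0$ in the triplet, while the location parameter survives the integration as $\mu$ (the term $\int_0^\xi i\mu x\,\frac{dx}{x}=i\mu\xi$ in the characteristic-function identity of Theorem~\ref{lem4} reproduces exactly the drift $\mu$).

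For the jump part I would treat the two half-lines separately. On the positive side, take $A=(x,+\infty)$ with $x>0$ and substitute $u=e^{t}x$ (so that $dt=du/u$) to convert (\ref{eq:l2851}) into the tail relation, analogous to (\ref{eq:l2852}):
\begin{equation*}
U((x,+\infty))=\int_0^{+\infty}M((e^{t}x,+\infty))\,dt=\int_x^{+\infty}\frac{M((u,+\infty))}{u}\,du .
\end{equation*}
Next I would evaluate the inner GTS tail. With the substitution $w=\lambda_{+}s$ the integral $\int_u^{+\infty}\alpha_{+}s^{-1-\beta_{+}}e^{-\lambda_{+}s}\,ds$ collapses to $\alpha_{+}\lambda_{+}^{\beta_{+}}\Gamma(-\beta_{+},\lambda_{+}u)$, using the upper incomplete gamma function $\Gamma(s,x)=\int_x^{+\infty}y^{s-1}e^{-y}\,dy$ introduced after (\ref{eq:l24}). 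Hence $U((x,+\infty))=\alpha_{+}\lambda_{+}^{\beta_{+}}\int_x^{+\infty}\frac{\Gamma(-\beta_{+},\lambda_{+}u)}{u}\,du$.

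The decisive step is that this outer integral need not be evaluated in closed form. The density is recovered by differentiating the tail, $U(dx)=-\frac{d}{dx}U((x,+\infty))\,dx$, and the fundamental theorem of calculus at once yields $U(dx)=\alpha_{+}\lambda_{+}^{\beta_{+}}\frac{\Gamma(-\beta_{+},\lambda_{+}x)}{x}\,dx$ for $x>0$. Running the identical argument on the negative half-line with $\lambda_{-},\beta_{-},\alpha_{-}$ and $|x|$ in place of $\lambda_{+},\beta_{+},\alpha_{+}$ and $x$ gives the second bracket, and summing the two contributions produces the stated $U(dx)$.

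I expect the genuine care to lie not in any single estimate but in two bookkeeping checks. First, that $U$ is a legitimate L\'evy measure: the outer integral converges at $+\infty$ since $\Gamma(-\beta_{+},\lambda_{+}u)$ decays like $u^{-1-\beta_{+}}e^{-\lambda_{+}u}$, while near the origin $\Gamma(-\beta_{+},\lambda_{+}x)\sim(\lambda_{+}x)^{-\beta_{+}}/\beta_{+}$, so that $\int_0 x^{2}\,U(dx)<\infty$ for $\beta_{+}<2$. Second, that differentiation may be taken under the tail integral, which is justified by exactly this local integrability. The main obstacle is therefore organizational: arranging the substitutions $u=e^{t}x$ and $w=\lambda_{+}s$ so that the double integral collapses and the incomplete gamma function emerges cleanly, after which the differentiation does all the remaining work.
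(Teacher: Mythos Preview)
Your proposal is correct and follows essentially the same route as the paper: both apply the triplet relation (\ref{eq:l2851}) with $A=(x,+\infty)$, make the substitution $u=e^{t}x$ to reduce to $U^{+}(dx)=\frac{W^{+}((x,+\infty))}{x}\,dx$, and then evaluate the GTS tail via $w=\lambda_{+}s$ to produce the incomplete gamma function. The only cosmetic difference is that the paper reads off the density $U^{+}(dx)$ directly from the integrand identity rather than writing the outer tail integral and differentiating it back, and it closes with the characteristic-exponent verification $\log\phi(\xi)=\int_0^{\xi}\Psi(x)\,\frac{dx}{x}$, whereas you instead add the (useful) checks that $U$ is a bona fide L\'evy measure.
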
 
 
\begin{proof} \ \\
\noindent
The L\'evy density of the Generalized Tempered Stable (GTS) distribution is defined in (\ref{eq:l23}) as follows.
 \begin{align}
W(dx) =\left(\frac{\alpha_{+}e^{-\lambda_{+}x}}{x^{1+\beta_{+}}} \boldsymbol{1}_{x>0}\right) dx  + \left(\frac{\alpha_{-}e^{-\lambda_{-}|x|}}{|x|^{1+\beta_{-}}} \boldsymbol{1}_{x<0}\right) dx =W^{+}(dx) + W^{-}(dx) \label{eq:lem61}  
 \end{align}
\noindent
Let  $\left[a^{+}, 0, W^{+}\right]$, the  L\'evy–Khintchine triplet of the Tempered stable distribution TS(\textbf{$\beta_{+}$}, \textbf{$\alpha_{+}$},\textbf{$\lambda_{+}$}) and $\left[b^{+}, 0, U^{+} \right]$, the  L\'evy–Khintchine triplet of the self-decomposable random variable $X$.\\

We apply the property (\ref{eq:l2851}) by choosing $A=(y,+\infty)$
\begin{equation}
 \begin{aligned}
U^{+}((y,+\infty))=\int_{y}^{+\infty}U^{+}(dx)=\int_{0}^{+\infty}W^{+}(( e^{t}y,+\infty))dt =\int_{y}^{+\infty}\frac{W^{+}((u,+\infty))}{u}du \label{eq:lem62}   
\end{aligned}
\end{equation} 
\noindent
In order to have the relation (\ref{eq:lem62}), we need to have:
\begin{equation*}
 \begin{aligned}
U^{+}(dx)=\frac{W^{+}((x,+\infty))}{x}dx
\end{aligned}
\end{equation*} 
The L\'evy density becomes
\begin{equation*}
 \begin{aligned}
U^{+}(dx)=\frac{W^{+}((x,+\infty))}{x}dx&=\frac{1}{x}\int_{x}^{+\infty}W^{+}(dy)dx=\frac{1}{x}\int_{x}^{+\infty}\frac{\alpha_{+}e^{-\lambda_{+}y}}{y^{1+\beta_{+}}}dydx  \\
&=\frac{\alpha_{+}\lambda_{+}^{\beta_{+}}}{x}\int_{\lambda_{+}x}^{+\infty}z^{-\beta_{+} - 1}e^{-z}dzdx \hspace{5mm}
   \hbox{$z=\lambda_{+}y$}\\
&=\alpha_{+}\lambda_{+}^{\beta_{+}}\frac{\Gamma(-\beta_{+},\lambda_{+}x)}{x} dx
\end{aligned}
\end{equation*} 

We have: 
\begin{equation}
 \begin{aligned}
U^{+}(dx)=\alpha_{+}\lambda_{+}^{\beta_{+}}\frac{\Gamma(-\beta_{+},\lambda_{+}x)}{x}dx \label {eq:lem63}
\end{aligned}
\end{equation} 
\noindent
We follow the same procedure on the left side of the Tempered stable distribution TS(\textbf{$\beta_{-}$}, \textbf{$\alpha_{-}$},\textbf{$\lambda_{-}$})  and we have:
\begin{equation}
 \begin{aligned}
U^{-}(dx)=\alpha_{-}\lambda_{-}^{\beta_{-}}\frac{\Gamma(-\beta_{-},\lambda_{-}x)}{x}dx \label {eq:lem63}
\end{aligned}
\end{equation} 

The L\'evy density of the random variable $X$  has the following expression
\begin{equation*}
 \begin{aligned}
U(dx)= U^{+}(dx) + U^{-}(dx)=\left[\alpha_{+}\lambda_{+}^{\beta_{+}}\frac{\Gamma(-\beta_{+},\lambda_{+}x)}{x}\boldsymbol{1}_{x>0} + \alpha_{-}\lambda_{-}^{\beta_{-}}\frac{\Gamma(-\beta_{-},\lambda_{-}|x|)}{|x|} \boldsymbol{1}_{x<0} \right]dx 
\end{aligned}
\end{equation*} 
The relation between the L\'evy densities $U^{+}(dx)$ and $W^{+}(dx)$ is provided if we derive the relation (\ref{eq:lem62}).
\begin{equation}
 \begin{aligned}
U^{+}(dy)&=\int_{0}^{+\infty} W^{+}( e^{t}dy)dt=\int_{0}^{+\infty}\alpha_{+}\frac{e^{-\lambda_{+}y e^{t}}}{(y e^{t})^{1+\beta_{+}}} e^{t}dt dy 
\end{aligned}
\end{equation} 
\noindent
We assume $\phi^{+}(\xi)$ is the characteristic function of the random variable $X^{+}$. 
   \begin{equation*}
 \begin{aligned}
\log(\phi^{+}(\xi))&=\int_{0}^{+\infty}(e^{iy\xi}-1)U^{+}(dy)=\int_{0}^{+\infty}\int_{0}^{+\infty}(e^{iy\xi}-1)\alpha_{+}\frac{e^{-\lambda_{+}y e^{t}}}{(y e^{t})^{1+\beta_{+}}} e^{t}dt dy \\
&=\int_{0}^{+\infty}\int_{0}^{+\infty}(e^{i\xi v e^{-t}}-1)\alpha_{+}\frac{e^{-\lambda_{+}v}}{v^{1+\beta_{+}}}dt dv  =\int_{0}^{1}\int_{0}^{+\infty}(e^{i\xi v u}-1)W^{+}(dv)dv\frac{du}{u} \\
&=\int_{0}^{1}\Psi^{+}(\xi u))\frac{du}{u}  =\int_{0}^{\xi}\Psi^{+}(x))\frac{dx}{x} \quad \quad x=\xi u
\end{aligned}
\end{equation*} 
Similarly, we have
   \begin{equation*}
 \begin{aligned}
\log(\phi^{-}(\xi)) =\int_{0}^{\xi}\Psi^{-}(x))\frac{dx}{x} 
\end{aligned}
\end{equation*} 
We assume $\phi(\xi)$ is the characteristic function of the random variable $X$. 
 \begin{equation}
 \begin{aligned}
 \log(\phi(\xi))&= \mu x i + \log(\phi^{+}(\xi)) + \log(\phi^{-}(-\xi))\\
 &= \int_{0}^{\xi}(\mu x i + \Psi^{+}(x)+ \Psi^{-}(-x)) \frac{dx}{x}= \int_{0}^{\xi} \Psi(x) \frac{dx}{x}
\end{aligned}
\end{equation} 
The random variable $X$ is self-decomposable, and its background driving L\'evy process is the Generalized Tempered Stable (GTS) distribution.
\end{proof} 

\subsubsection{ Asymptotic behavior of $U(dx)$}

%\noindent
As $x \to 0$, we have
   \begin{equation*}
 \begin{aligned}
&\Gamma(-\beta_{+},\lambda_{+}x) \sim \frac{1}{\beta_{+}\lambda_{+}^{\beta_{+}}}\frac{1}{x^{\beta_{+}}}\\
&U^{+}(dx) \sim \frac{\alpha_{+}}{\beta_{+}}\frac{1}{x^{1+\beta_{+}}}dx
  \end{aligned}
\end{equation*}  
The L\'evy density of the self-decomposable random variable $X$ becomes the L\'evy density of a stable distribution  \cite{Giacomo}.
\begin{equation}
 \begin{aligned}
U(dx)\sim\left[\frac{\alpha_{+}}{\beta_{+}}\frac{1}{x^{1+\beta_{+}}}\boldsymbol{1}_{x>0}\right]dx+ \left[\frac{\alpha_{-}}{\beta_{-}}\frac{1}{x^{1+\beta_{-}}} \boldsymbol{1}_{x<0} \right]dx \label {eq:lem65}\end{aligned}
\end{equation}
%\noindent
As $x \to +\infty$, we have

   \begin{equation*}
 \begin{aligned}
&\Gamma(-\beta_{+},\lambda_{+}x) \sim \frac{1}{\lambda_{+}^{ 1 + \beta_{+}}}\frac{e^{-\lambda_{+}x}}{x^{1+\beta_{+}}}\\
&U^{+}(dx) \sim \frac{\alpha_{+}}{\lambda_{+}} \frac{e^{-\lambda_{+}x}}{x^{2+\beta_{+}}}dx
  \end{aligned}
\end{equation*}  
The L\'evy density of the self-decomposable random variable $X$ becomes the Generalized Tempered Stable (GTS) distribution.
\begin{equation}
 \begin{aligned}
U(dx)\sim\left[\frac{\alpha_{+}}{\lambda_{+}} \frac{e^{-\lambda_{+}x}}{x^{2+\beta_{+}}}\boldsymbol{1}_{x>0}\right]dx + \left[\frac{\alpha_{-}}{\lambda_{-}}\frac{e^{-\lambda_{-}x}}{x^{2+\beta_{-}}} \boldsymbol{1}_{x<0} \right]dx \label {eq:lem66}\end{aligned}
\end{equation}
U(dx) behaves like a $\alpha$-stable distribution near zero, with a GTS distribution at the tails.\\

\begin{theorem} \label{lem7} (Characteristic Exponents) \ \\
 \noindent
The Generalized Tempered Stable (GTS) distribution is the Background Driving L\'evy Process (BDLP) of a self-decomposable random variable $X$. Then the characteristic exponents of the self-decomposable random variable $X$ is:
\begin{align}
\gamma(\xi)=\mu\xi i + \gamma^{+}(\xi) + \gamma^{-}( - \xi)   \label {eq:l2720}
 \end{align}
 Where 
\begin{equation*}
\begin{aligned}
\gamma^{+}(\xi)&=\alpha_{+}\Gamma(-\beta_{+}) \int_{0}^{1}\frac{(\lambda_{+} - i\xi u)^{\beta_{+}} - \lambda_{+}^{\beta_{+}}}{u} du\\
\gamma^{-}(\xi)&=\alpha_{-}\Gamma(-\beta_{-})\int_{0}^{1}\frac{(\lambda_{-} - i\xi u)^{\beta_{-}} - \lambda_{-}^{\beta_{-}}}{u}du \\
 \end{aligned}
\end{equation*} 

\end{theorem}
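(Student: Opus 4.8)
The plan is to read the result directly off the random integral representation already established, rather than to recompute any integrals of L\'evy densities. The proof of Theorem \ref{lem6} shows that the characteristic exponent $\gamma(\xi)=\log(\phi(\xi))$ of the self-decomposable variable $X$ having the GTS law as BDLP satisfies
\begin{equation*}
\gamma(\xi)=\int_{0}^{\xi}\Psi(x)\,\frac{dx}{x},
\end{equation*}
where $\Psi$ is the characteristic exponent of the GTS law and decomposes as $\Psi(x)=\mu x i+\Psi^{+}(x)+\Psi^{-}(-x)$, with the closed forms of $\Psi^{+}$ and $\Psi^{-}$ established earlier. The whole task therefore reduces to substituting this decomposition and evaluating three elementary one-dimensional integrals.

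First I would split the integral into its drift, positive, and negative parts. The drift term is immediate, since $\int_{0}^{\xi}\mu x i\,\frac{dx}{x}=\mu i\int_{0}^{\xi}dx=\mu\xi i$, which reproduces the leading term of \eqref{eq:l2720}. For the remaining two pieces I would apply the single change of variable $x=\xi u$, under which $\frac{dx}{x}=\frac{du}{u}$ and the limits $0\to\xi$ become $0\to 1$. This turns $\int_{0}^{\xi}\Psi^{+}(x)\,\frac{dx}{x}$ into $\int_{0}^{1}\Psi^{+}(\xi u)\,\frac{du}{u}$, which is exactly $\gamma^{+}(\xi)$, and $\int_{0}^{\xi}\Psi^{-}(-x)\,\frac{dx}{x}$ into $\int_{0}^{1}\Psi^{-}(-\xi u)\,\frac{du}{u}$; since $\Psi^{-}(-\xi u)=\alpha_{-}\Gamma(-\beta_{-})\bigl((\lambda_{-}+i\xi u)^{\beta_{-}}-\lambda_{-}^{\beta_{-}}\bigr)$, this last integral is precisely $\gamma^{-}(-\xi)$. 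Summing the three contributions gives $\gamma(\xi)=\mu\xi i+\gamma^{+}(\xi)+\gamma^{-}(-\xi)$.

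The only point requiring genuine care, and the sole obstacle beyond bookkeeping, is the integrability of the integrand at the lower endpoint $x=0$, where the factor $1/x$ is singular. I would dispatch this with a first-order expansion: since $(\lambda_{+}-ix)^{\beta_{+}}-\lambda_{+}^{\beta_{+}}=-i\beta_{+}\lambda_{+}^{\beta_{+}-1}x+O(x^{2})$ as $x\to 0$, the ratio $\Psi^{+}(x)/x$ remains bounded near the origin, and the same holds for the negative part, so each integral converges and both the change of variable and the splitting are legitimate. What is left is only the sign bookkeeping in the argument of $\Psi^{-}$, which is the one place where an error could slip in.
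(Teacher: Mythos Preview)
Your argument is correct and takes a genuinely different, and considerably shorter, route than the paper. The paper does not start from the relation $\gamma(\xi)=\int_{0}^{\xi}\Psi(x)\,\frac{dx}{x}$ at all; instead it recomputes the characteristic exponent directly from the L\'evy density $U^{+}(dx)=\alpha_{+}\lambda_{+}^{\beta_{+}}\Gamma(-\beta_{+},\lambda_{+}x)\,x^{-1}\,dx$ of Theorem~\ref{lem6}. Concretely, it expands $e^{iy\xi}-1$ as a power series, inserts an integral representation of the incomplete gamma function, swaps the order of integration, evaluates the inner integrals in terms of $\Gamma(k-\beta_{+})$, recognises the resulting series via the Pochhammer identity and Newton's generalised binomial theorem as the derivative of $(1-i\xi/\lambda_{+})^{\beta_{+}}-1$, and finally integrates this first-order ODE to recover $\int_{0}^{1}\bigl((\lambda_{+}-i\xi u)^{\beta_{+}}-\lambda_{+}^{\beta_{+}}\bigr)u^{-1}\,du$. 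Your approach bypasses all of this by exploiting the closed form for $\Psi^{\pm}$ already established in Theorem~\ref{lem2}, so a single substitution $x=\xi u$ finishes the job. What the paper's longer computation buys is an explicit power-series expansion of $\gamma^{+}(\xi)$ in the intermediate steps, which is useful later for reading off cumulants; what your argument buys is brevity and a transparent explanation of \emph{why} the answer has the form $\int_{0}^{1}\Psi^{\pm}(\pm\xi u)\,\frac{du}{u}$. Both are valid; yours is the more economical proof of the theorem as stated.
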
 

\begin{proof} \ \\
\noindent
\begin{equation}
\begin{aligned}
\Gamma(-\beta_{+},\lambda_{+}y) &=\int_{\lambda_{+}y}^{+\infty}x^{-\beta_{+}-1}e^{-x}dx=\int_{1}^{+\infty}(\lambda_{+}y)^{-\beta_{+}}u^{-\beta_{+}-1}e^{-\lambda_{+}yu}du
 \label{eq:l081a}
 \end{aligned}
\end{equation}

\begin{equation}
\begin{aligned}
\int_{0}^{+\infty}(e^{iy\xi}-1)\frac{ \Gamma(-\beta_{+},\lambda_{+}y)}{y}dy&=\sum_{k=1}^{\infty} \frac{(i\xi)^{k}}{k!}\left[\int_{0}^{+\infty}\frac{ y^{k} \Gamma(-\beta_{+},\lambda_{+}y)}{y}dy\right] \hspace{5mm}  \hbox{recall (\ref{eq:l081a})}\\
&=\sum_{k=1}^{\infty} \frac{(i\xi)^{k}}{k!}\left[\int_{0}^{+\infty}y^{k}\int_{1}^{+\infty}\lambda_{+}^{-\beta_{+}}(uy)^{-\beta_{+}-1}e^{-\lambda_{+}yu}dudy\right]\\
&=\sum_{k=1}^{\infty} \frac{(i\xi)^{k}}{k!}\left[\int_{1}^{+\infty}\lambda_{+}^{-\beta_{+}}u^{-\beta_{+}-1}\int_{0}^{+\infty}y^{k-\beta_{+}-1}e^{-\lambda_{+}yu}dydu\right]\\
&=\sum_{k=1}^{\infty} \frac{(i\xi)^{k}}{k!}\left[\frac{\Gamma(k-\beta_{+})}{\lambda_{+}^{k}}\int_{1}^{+\infty}u^{-k-1}du\right]\\
&=\Gamma(-\beta_{+})\sum_{k=1}^{\infty}\frac{\Gamma(k-\beta_{+})}{k!\Gamma(-\beta_{+})k} (\frac{i\xi}{\lambda_{+}})^{k} \label{eq:l086}
\end{aligned}
\end{equation}

\begin{equation*}
\begin{aligned}
\frac{\Gamma(j-\beta)}{\Gamma(-\beta)\Gamma(j+1)}=(-1)^{j}{\beta \choose j}    \hspace{5mm}
   \hbox{ratio of falling factorials (Pochhammer symbol)} 
\end{aligned}
\end{equation*}

\begin{equation}
\begin{aligned}
\xi\frac{d\left(\sum_{k=1}^{\infty}\frac{\Gamma(k - \beta_{+})}{k!\Gamma(-\beta_{+})} \frac{1}{k} (\frac{i\xi}{\lambda_{+}})^{k}\right)}{d\xi}= \sum_{k=1}^{\infty}\frac{\Gamma(k - \beta_{+})}{k!\Gamma(-\beta_{+})} (\frac{i\xi}{\lambda_{+}})^{k}= \sum_{k=1}^{\infty}{{\beta_{+}}  \choose k} (\frac{-i\xi}{\lambda_{+}})^{k} \label{eq:l084}
\end{aligned}
\end{equation}

Newton's generalized binomial theorem can be applied, and (\ref{eq:l084}) becomes
\begin{equation}
\begin{aligned}
\xi\frac{d\left(\sum_{k=1}^{\infty}\frac{\Gamma(k+1-\beta_{+})}{k!\Gamma(1-\beta_{+})} \frac{1}{k} (\frac{i\xi}{\lambda_{+}})^{k}\right)}{d\xi} = (1-\frac{i\xi}{\lambda_{+}})^{\beta_{+}} -1\hspace{5mm}
   \forall \xi \in \mathbb{R}  \label{eq:l085}
\end{aligned}
\end{equation}
The first-order differential equation can be solved as follows
\begin{equation*}
\begin{aligned}
\sum_{k=1}^{\infty}\frac{\Gamma(k+1-\beta_{+})}{k!\Gamma(1-\beta_{+})} \frac{1}{k} (\frac{i\xi}{\lambda_{+}})^{k} &=\frac{1}{\lambda_{+}^{\beta_{+}}}\int_{0}^{\xi}\frac{(\lambda_{+} - i y)^{\beta_{+}} - \lambda_{+}^{\beta_{+}}}{y} dy\\
&=\frac{1}{\lambda_{+}^{\beta_{+}}}\int_{0}^{1}\frac{(\lambda_{+} - i\xi u)^{\beta_{+}} - \lambda_{+}^{\beta_{+}}}{u} du \quad \quad u=\frac{y}{\xi}
\end{aligned}
\end{equation*}
From the previous development (\ref{eq:l086}), we have :
\begin{equation}
\begin{aligned}
\int_{0}^{+\infty}(e^{iy\xi}-1)U^{+}(dx) = \alpha_{+}\Gamma(-\beta_{+}) \int_{0}^{1}\frac{(\lambda_{+} - i\xi u)^{\beta_{+}} - \lambda_{+}^{\beta_{+}}}{u} du \label{eq:l086a}
\end{aligned}
\end{equation}
\noindent
We have the same results on $U^{-}(dx)$
\begin{equation}
\begin{aligned}
\int_{0}^{+\infty}(e^{iy\xi}-1)U^{-}(dx) =\alpha_{-}\Gamma(-\beta_{-})\int_{0}^{1}\frac{(\lambda_{-} - i\xi u)^{\beta_{-}} - \lambda_{-}^{\beta_{-}}}{u}du \label{eq:l086b}
\end{aligned}
\end{equation}
\end{proof} 
%\newpage

\section{Fitting General Tempered Stable Distribution Results}\label{sec5}

\subsection{Review of the Maximum Likelihood Method} 
\noindent
From a probability density function $f(x, V)$ with parameter $V$ of size $p=7$ and the sample data $X$ of size $m$,  we definite  the likelihood function and its derivatives as follows: 
\begin{align}
 L(x,V) &= \prod_{j=1}^{m} f(x_{j},V) \quad &
 l(x,V) &= \sum_{j=1}^{m} log(f(x_{j},V))  \label {eq:l32}
  \end{align}
  \begin{equation}
 \begin{aligned}
 \frac{dl(x,V)}{dV_j} &= \sum_{i=1}^{m} \frac{\frac{df(x_{i},V)}{dV_j}}{f(x_{i},V)} \quad \quad 
 \frac{d^{2}l(x,V)}{dV_{k}dV_{j}} &= \sum_{i=1}^{m} \left(\frac{\frac{d^{2}f(x_{i},V)}{dV_{k}dV_{j}}}{f(x_{i},V)}- \frac{\frac{df(x_{i},V)}{dV_{k}}}{f(x_{i},V)}\frac{\frac{df(x_{i},V)}{dV_j}}{f(x_{i},V)}\right). \label {eq:l35}
\end{aligned}
\end{equation}
\noindent
To perform the maximum of the likelihood function (\ref{eq:l32}), the quantities  $\frac{df(x, V)}{dV_j}$ and $\frac{d^{2}f(x, V)}{dV_{k}dV_{j}}$ in (\ref{eq:l35}) are the first and second order derivatives of the probability density and should be computed with high accuracy.
\noindent
The quantities $\frac{d^{2}l(x, V)}{dV_{k}dV_{j}}$ are critical in computing the Hessian matrix and the Fisher information matrix.\\

\noindent
Given the parameter $V=(\textbf{$\mu$}, \textbf{$\beta_{+}$}, \textbf{$\beta_{-}$}, \textbf{$\alpha_{+}$},\textbf{$\alpha_{-}$}, \textbf{$\lambda_{+}$}, \textbf{$\lambda_{-}$})$ and the sample data set $X$, we derive from (\ref{eq:l35}) the following vector and matrix: (\ref{eq:l36}) .
  \begin{align}
 I'(X,V) =\left(\frac{dl(x,V)}{dV_j}\right)_{1 \leq j \leq p}  \quad  \quad  I''(X,V) = \left(\frac{d^{2}l(x,V)}{dV_{k}dV_{j}}\right)_{\substack{{1 \leq k \leq p} \\ {1 \leq j \leq p}}}. \label {eq:l36}
 \end{align}
\noindent
The advanced fast FRFT scheme developed previously is used to compute the likelihood function (\ref{eq:l32}) and its derivatives (\ref{eq:l36}) in the optimization process. \\

\noindent
The local solution $V^{0}$ should meet the following requirements:
  \begin{align}
 I'(x,V^{0})=0 \quad \quad U^{T}\mathbf{I''(X,V^{0})}U \leq 0\hspace{5mm}  \hbox{ $\forall U \in \mathbb{R}^{p}$}.\label{eq:l37}
 \end{align}
The inequality in (\ref{eq:l37}) is met when $I''(x, V^{0})$ is a negative semi-definite matrix. The Newton-Raphson iteration algorithm provides the numerical solution (\ref{eq:l38}):
  \begin{align}
V^{n+1}=V^{n}-{\left(I''(x,V^{n})\right)^{-1}}I'(x,V^{n}). \label{eq:l38}
 \end{align}
\noindent
See \cite{giudici2013wiley} for details on maximum likelihood and Newton-Raphson iteration procedure.

\subsection{GTS Parameter Estimation from S$\&$P 500 Index}
\noindent
The results of the GTS parameter estimation from S$\&$P 500 return data are reported in Table \ref{tab1}. As expected, the stability indexes (\textbf{$\beta_{-}$},\textbf{$\beta_{+}$}), the process intensities (\textbf{$\alpha_{-}$},\textbf{$\alpha_{+}$}), and the decay rate (\textbf{$\lambda_{-}$},\textbf{$\lambda_{+}$}) are all positive. The results show that $0\le \beta_{+} \le 1$ and $0\le \beta_{-} \le 1$, S$\&$P 500 return is of infinite activity process (\ref{eq:l24}) with infinite number of jumps in any given time interval; S$\&$P 500 is also a finite variance process (\ref{eq:l25}).
\begin{table}[ht]
\vspace{-0.3cm}
\caption{ FRFT Maximum Likelihood GTS Parameter Estimation}
\label{tab1}
\vspace{-0.3cm}
\centering
\begin{tabular}{@{}lccccccc@{}}
\toprule
\textbf{Model} & \textbf{$\mu$} & \textbf{$\beta_{+}$} & \textbf{$\beta_{-}$} & \textbf{$\alpha_{+}$} & \textbf{$\alpha_{-}$}  & \textbf{$\lambda_{+}$}  & \textbf{$\lambda_{-}$}  \\ \midrule
\textbf{GTS} & -0.693477 & 0.682290 & 0.242579 & 0.458582 & 0.414443 & 0.822222 & 0.727607  \\ \bottomrule
\end{tabular}
\vspace{-0.2cm}
\end{table}

\noindent
As shown in Table \ref{tab1}, we have \textbf{$\alpha_{-}$} $\leq$ \textbf{$\alpha_{+}$} and the arrival rate of the positive jump is more than that of the negative jump. In addition, when \textbf{$\lambda_{-}$} $\leq$ \textbf{$\lambda_{+}$}, the S$\&$P 500 return is a bit left-skewed distribution. Based on $\alpha$ and $\lambda$, the tail distribution is thicker on the negative side of the S$\&$P 500 return distribution ($X_{-}$) than on the positive side of the distribution.\\

\noindent
The Newton-Raphson iteration algorithm (\ref{eq:l38}) was implemented, and the results are reported in Table \ref{tab2}. Each raw has eleven columns made of the iteration number, the seven parameters \textbf{$\mu$}, \textbf{$\beta_{+}$}, \textbf{$\beta_{-}$}, \textbf{$\alpha_{+}$}, \textbf{$\alpha_{-}$}, \textbf{$\lambda_{+}$}, \textbf{$\lambda_{-}$} and three statistical indicators: the log-likelihood (\textbf{$Log(ML)$}), the norm of the partial derivatives (\textbf{$||\frac{dLog(ML)}{dV}||$}), the maximum value of the eigenvalues (\textbf{$Max Eigen Value$}). The statistical indicators aim at checking if the two necessary and sufficient conditions described in equations(\ref{eq:l37}) are all met. \textbf{$Log(ML)$} displays the value of the Naperian logarithm of the likelihood function $L(x, V)$ as described in (\ref{eq:l32}); \textbf{$||\frac{dLog(ML)}{dV}||$} displays the value of the norm of the first derivatives (\textbf{$\frac{dl(x, V)}{dV_j}$}) described in equations (\ref{eq:l35}); and \textbf{$Max Eigen Value$} displays the maximum value of the seven eigenvalues generated by the Hessian Matrix (\textbf{$\frac{d^{2}l(x, V)}{dV_{k}dV_{j}}$}) as described in equations (\ref{eq:l36}).
\begin{table*}[ht]
\vspace{-0.3cm}
\centering
\caption{GTS Parameter Estimations from S$\&$P 500 Index}
\label{tab2}
%\vspace{-0.3cm}
\resizebox{13cm}{!}{%
\begin{tabular}{ccccccccccc}
\hline
\textbf{$Iterations$} & \textbf{$\mu$} & \textbf{$\beta_{+}$} & \textbf{$\beta_{-}$} & \textbf{$\alpha_{+}$} & \textbf{$\alpha_{-}$} & \textbf{$\lambda_{+}$} & \textbf{$\lambda_{-}$} & \textbf{$Log(ML)$} & \textbf{$||\frac{dLog(ML)}{dV}||$} & \textbf{$Max Eigen Value$} \\ \hline
1  & -0.5266543 & 0.67666185 & 0.43662658 & 0.43109407 & 0.32587754 & 0.85012595 & 0.60145985 & -4664.7647 & 289.206866 & 48.3287566 \\
2  & -0.5459715 & 0.67059496 & 0.42415971 & 0.44989796 & 0.34810329 & 0.80921924 & 0.60289955 & -4660.2156 & 35.9853332 & -6.0431046 \\
3  & -0.7108484 & 0.66676659 & 0.20613817 & 0.46963149 & 0.41246296 & 0.8368645  & 0.73679498 & -4663.5777 & 1082.00304 & 449.252496 \\
4  & -0.6704327 & 0.66891111 & 0.11250993 & 0.46528242 & 0.50028087 & 0.83422813 & 0.86330646 & -4660.5268 & 135.684998 & 15.1103621 \\
5  & -0.739816  & 0.66779246 & 0.09097181 & 0.48302677 & 0.45922815 & 0.85551132 & 0.81320574 & -4660.0208 & 45.7082317 & 10.8419307 \\
6  & -0.6517205 & 0.65580459 & 0.19673172 & 0.47996977 & 0.42743878 & 0.85248358 & 0.75350741 & -4659.833  & 46.3329967 & 11.8534517 \\
7  & -0.8136505 & 0.71997129 & 0.21558026 & 0.44023152 & 0.41952419 & 0.79423596 & 0.74025892 & -4662.4815 & 1187.98211 & 166.006596 \\
8  & -0.7805857 & 0.70635344 & 0.22952102 & 0.44666893 & 0.4166063  & 0.80360771 & 0.7334486  & -4659.7758 & 85.6584355 & -3.4305336 \\
9  & -0.7543747 & 0.69912144 & 0.23468747 & 0.45029318 & 0.41542774 & 0.80935861 & 0.73077041 & -4659.1944 & 1.07446211 & -0.7993855 \\
10 & -0.7533784 & 0.69885561 & 0.23480071 & 0.45042677 & 0.41541378 & 0.80956671 & 0.73072468 & -4659.1943 & 1.03706137 & -0.8136858 \\
11 & -0.752414  & 0.69859801 & 0.2349108  & 0.45055614 & 0.41540021 & 0.8097682  & 0.73068024 & -4659.1942 & 1.00184387 & -0.8273569 \\
12 & -0.7514794 & 0.69834808 & 0.23501796 & 0.45068156 & 0.41538701 & 0.80996353 & 0.73063702 & -4659.1942 & 0.96860649 & -0.8404525 \\
13 & -0.7496917 & 0.69786927 & 0.23522417 & 0.4509216  & 0.41536164 & 0.81033731 & 0.73055392 & -4659.194  & 0.90739267 & -0.8650984 \\
14 & -0.7471903 & 0.69719768 & 0.23551545 & 0.45125776 & 0.41532584 & 0.81086063 & 0.73043671 & -4659.1938 & 0.82669598 & -0.8987475 \\
15 & -0.7463993 & 0.69698491 & 0.23560822 & 0.45136413 & 0.41531445 & 0.8110262  & 0.73039942 & -4659.1937 & 0.80232744 & -0.9091961 \\
16 & -0.7456279 & 0.69677723 & 0.23569899 & 0.45146791 & 0.41530331 & 0.8111877  & 0.73036294 & -4659.1937 & 0.77907591 & -0.9193009 \\
17 & -0.7434222 & 0.69618233 & 0.23596025 & 0.45176484 & 0.41527126 & 0.81164974 & 0.73025805 & -4659.1935 & 0.71530396 & -0.9477492 \\
18 & -0.7427203 & 0.69599272 & 0.2360439  & 0.45185938 & 0.41526101 & 0.81179683 & 0.73022449 & -4659.1934 & 0.69583386 & -0.9566667 \\
19 & -0.7376152 & 0.69460887 & 0.23665992 & 0.45254797 & 0.41518555 & 0.81286777 & 0.72997772 & -4659.193  & 0.56549166 & -1.0197019 \\
20 & -0.7353516 & 0.69399264 & 0.23693733 & 0.45285383 & 0.41515159 & 0.81334323 & 0.72986678 & -4659.1929 & 0.51375646 & -1.0466686 \\
21 & -0.7342715 & 0.69369811 & 0.23707047 & 0.45299987 & 0.41513529 & 0.81357023 & 0.72981356 & -4659.1928 & 0.47702329 & -1.0633442 \\
22 & -0.727748  & 0.69191195 & 0.23788938 & 0.45388257 & 0.41503509 & 0.81494168 & 0.72948683 & -4659.1924 & 0.30254549 & -1.1564316 \\
23 & -0.7269375 & 0.69168915 & 0.23799284 & 0.45399234 & 0.41502243 & 0.81511217 & 0.72944561 & -4659.1924 & 0.28659447 & -1.1671214 \\
24 & -0.7261524 & 0.69147312 & 0.23809343 & 0.45409871 & 0.41501011 & 0.81527736 & 0.72940554 & -4659.1923 & 0.27235218 & -1.177299  \\
25 & -0.7074205 & 0.68625723 & 0.24059125 & 0.45665159 & 0.41470173 & 0.81923664 & 0.72841073 & -4659.1916 & 0.15127988 & -1.3705213 \\
26 & -0.7028887 & 0.68497607 & 0.24122265 & 0.45727561 & 0.4146222  & 0.82020261 & 0.72815815 & -4659.1915 & 0.12096032 & -1.4033087 \\
27 & -0.6987925 & 0.68381109 & 0.24180316 & 0.45784257 & 0.4145479  & 0.82107955 & 0.72792474 & -4659.1914 & 0.07862954 & -1.4283679 \\
28 & -0.6934505 & 0.68228736 & 0.24256739 & 0.45858391 & 0.41444901 & 0.82222574 & 0.72761631 & -4659.1914 & 0.75324649 & -1.6442249 \\
29 & -0.6934765 & 0.68229002 & 0.24257963 & 0.45858236 & 0.41444404 & 0.82222285 & 0.72760762 & -4659.1914 & 9.93E-06   & -1.4542674 \\
30 & -0.6934774 & 0.68229032 & 0.24257975 & 0.45858219 & 0.41444396 & 0.8222226  & 0.7276075  & -4659.1914 & 7.4193E-08 & -1.4542632\\ \hline
\end{tabular}%
}
\vspace{-0.3cm}
\end{table*}
%\newpage

\noindent
 As shown in Table \ref{tab2}, the log-likelihood (\textbf{$Log(ML)$}) value starts at $-4664.7647$ and increases to a limit of $-4659.1914$; the \textbf{$||\frac{dLog(ML)}{dV}||$} value starts at $289.206866$ and decreases to $0$; and the maximum value of the eigenvalues (\textbf{$Max Eigen Value$}) starts at $48.3287566$ and converges to $-1.4542632$, which is negative. The Hessian matrix in (\ref{eq:l36}) is a negative semi-definite matrix at a converged solution. Therefore, both conditions in (\ref{eq:l37}) are met, and we have a locally optimal solution.
% \newpage
\begin{figure}[ht]
\vspace{-0.3cm}
    \centering
  \begin{subfigure}[b]{0.4\linewidth}
    \includegraphics[width=\linewidth]{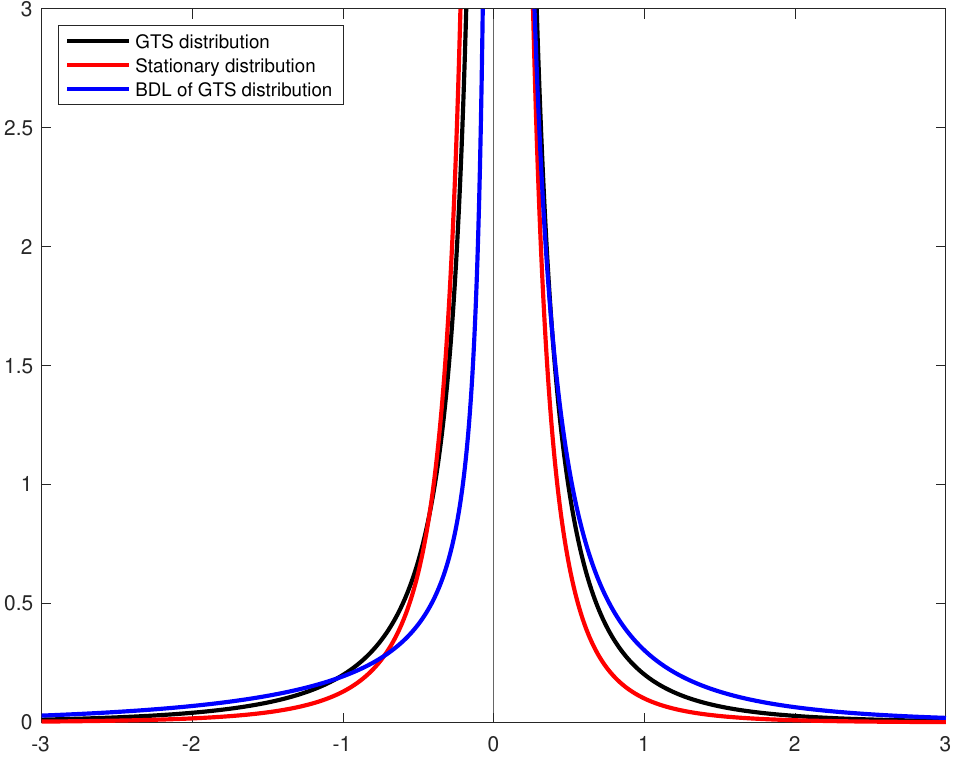}
\vspace{-0.4cm}
     \caption{L\'evy densities }
         \label{fig911}
  \end{subfigure}
  \begin{subfigure}[b]{0.5\linewidth}
    \includegraphics[width=\linewidth]{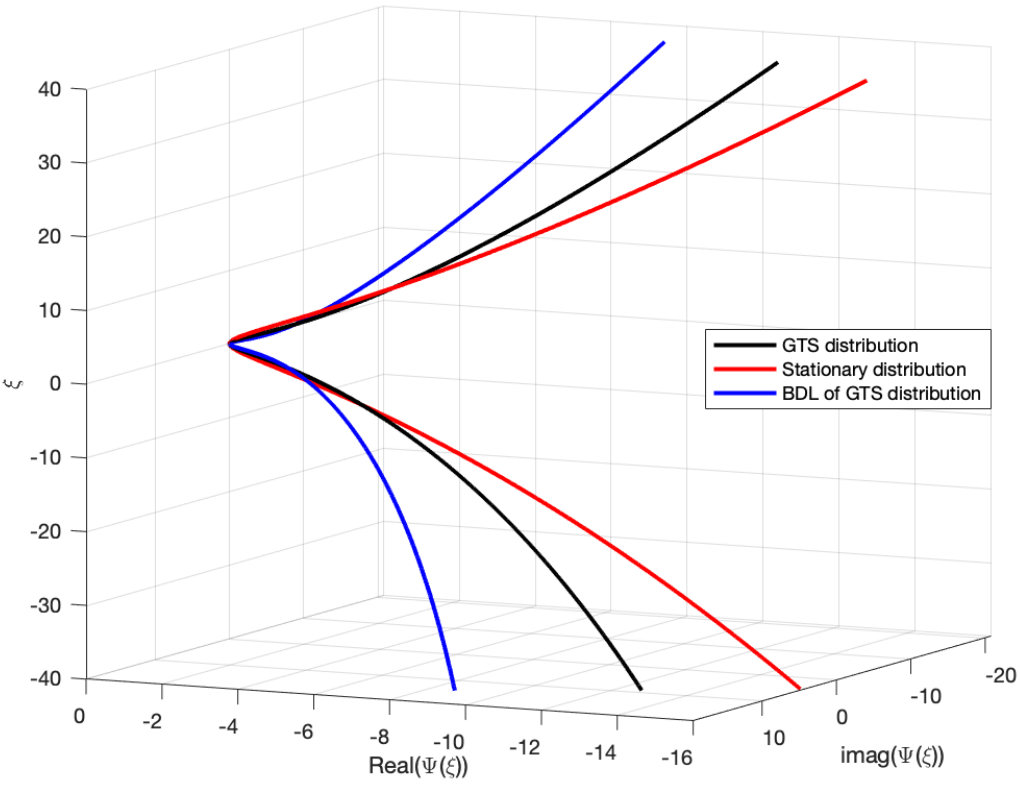}
\vspace{-0.6cm}
     \caption{Characteric exponents}
         \label{fig921}
          \end{subfigure}
\vspace{-0.6cm}
  \caption{ S\&P500 index Data: $\mu=-0.693477$, $\beta_{+}=0.682290$, $\beta_{-}=0.242579$, $\alpha_{+}=0.458582$, $\alpha_{-}=0.414443$, $\lambda_{+}=0.822222$, $\lambda_{-}=0.727607$}
  \label{fig01}
\vspace{-0.6cm}
\end{figure}
\newpage
\subsection{GTS Parameter Estimation: Bitcoin}
\noindent
The results of the GTS parameter estimation from Bitcoin returns are reported in Table \ref{tab3}. As expected, the stability indexes (\textbf{$\beta_{-}$},\textbf{$\beta_{+}$}), the process intensities (\textbf{$\alpha_{-}$},\textbf{$\alpha_{+}$}), and the decay rate (\textbf{$\lambda_{-}$},\textbf{$\lambda_{+}$}) are all positive. The results show that $0\le \beta_{+} \le 1$ and $0\le \beta_{-} \le 1$, Bitcoin return is of infinite activity process (\ref{eq:l24}) with infinite number of jumps in any given time interval; Bitcoin is also a finite variance process (\ref{eq:l25}).
\begin{table}[ht]
\caption{ FRFT Maximum Likelihood GTS Parameter Estimation}
\label{tab3}
\vspace{-0.3cm}
\centering
\begin{tabular}{@{}lccccccc@{}}
\toprule
\textbf{Model} & \textbf{$\mu$} & \textbf{$\beta_{+}$} & \textbf{$\beta_{-}$} & \textbf{$\alpha_{+}$} & \textbf{$\alpha_{-}$}  & \textbf{$\lambda_{+}$}  & \textbf{$\lambda_{-}$}  \\ \midrule
\textbf{GTS}  & -0.736924  & 0.461378 & 0.267178 & 0.810017 & 0.517347 & 0.215628 & 0.191937   \\ \bottomrule
\end{tabular}
\vspace{-0.3cm}
\end{table}

%\newpage
\noindent
 As shown in Table \ref{tab3}, we have \textbf{$\alpha_{-}$} $\leq$ \textbf{$\alpha_{+}$} and the arrival rate of the positive jump is significantly more than that of the negative jump. In addition, when we have \textbf{$\lambda_{-}$} $\leq$ \textbf{$\lambda_{+}$} then Bitcoin return is a left-skewed distribution. According to the values of  the process intensity ($\alpha$) and the decay rate($\lambda$), Bitcoin tail distribution is thicker on the negative side .\\
 
\noindent
The Newton-Raphson iteration algorithm (\ref{eq:l38}) was implemented, and the results of the iteration process are reported in Table \ref{tab4}. Table \ref{tab4} and Table \ref{tab2} have similar structures. As shown in Table \ref{tab4}, the log-likelihood (\textbf{$Log(ML)$}) value starts at $-10036.656$ and increases to a limit of $-9751.2193$; the \textbf{$||\frac{dLog(ML)}{dV}||$} value starts at $3188.1469$ and decreases to almost $0$; and the maximum value of the eigenvalues (\textbf{$Max Eigen Value$}) starts at $2854.2231$ and converges to $-0.000436$, which is negative. The Hessian matrix in (\ref{eq:l36}) is a negative semi-definite matrix at a converged solution. Therefore, both conditions in (\ref{eq:l37}) are met, and we have a locally optimal solution.
\begin{table*}[ht]
\vspace{-0.6cm}
\centering
\caption{GTS Parameter Estimation from Bitcoin Returns}
\label{tab4}
%\vspace{-0.3cm}
\resizebox{13cm}{!}{%
\begin{tabular}{ccccccccccc}
\hline
\textbf{$Iterations$} & \textbf{$\mu$} & \textbf{$\beta_{+}$} & \textbf{$\beta_{-}$} & \textbf{$\alpha_{+}$} & \textbf{$\alpha_{-}$} & \textbf{$\lambda_{+}$} & \textbf{$\lambda_{-}$} & \textbf{$Log(ML)$} & \textbf{$||\frac{dLog(ML)}{dV}||$} & \textbf{$Max Eigen Value$} \\ \hline
1 & -0.5973 & 0.3837 & 0.4206 & 1.872 & 0.694 & 0.5333 & 0.219 & -10036.656 & 3188.1469 & 2854.2231 \\
2 & -1.2052311 & 0.35850753 & 0.44451399 & 2.00512495 & 0.62313779 & 0.53840744 & 0.20635235 & -9912.1541 & 1971.95236 & 1518.15345 \\
3 & -1.624469 & 0.31755508 & 0.45798507 & 2.23748588 & 0.58475918 & 0.56388213 & 0.17998332 & -9865.5242 & 952.027546 & 716.347539 \\
4 & -1.9959515 & 0.2615866 & 0.47359748 & 2.57864607 & 0.5494987 & 0.60990478 & 0.16377681 & -9862.2551 & 440.489423 & 295.233361 \\
5 & -2.8340545 & -0.0033236 & 0.55268657 & 4.51611949 & 0.47359381 & 0.87264266 & 0.130233 & -9911.337 & 510.396179 & 60.6518499 \\
6 & -2.701598 & 0.07378625 & 0.54127007 & 3.76512451 & 0.47819392 & 0.7743675 & 0.13294829 & -9896.0802 & 611.613266 & 117.692562 \\
7 & -1.4130144 & 0.37715064 & 0.48510637 & 1.50024644 & 0.5523787 & 0.39998447 & 0.15474644 & -9804.2548 & 801.962681 & 307.039536 \\
8 & -1.049984 & 0.45100489 & 0.4741659 & 1.13318569 & 0.56070768 & 0.30817133 & 0.15667085 & -9775.871 & 728.753228 & 295.768229 \\
9 & -0.8211241 & 0.50542632 & 0.46591368 & 0.90824445 & 0.54487666 & 0.23555274 & 0.1533588 & -9756.7912 & 397.102674 & 133.652433 \\
10 & -0.7319208 & 0.53998085 & 0.44372538 & 0.81377008 & 0.54717214 & 0.19930192 & 0.15902826 & -9752.0888 & 89.8942291 & 12.4808028 \\
11 & -1.2317854 & 0.58475806 & 0.23780021 & 0.81260395 & 0.49719481 & 0.18597445 & 0.19343462 & -9751.3131 & 32.1533364 & -2.5188051 \\
12 & -0.7150133 & 0.4721273 & 0.29761245 & 0.80521805 & 0.52356611 & 0.21204916 & 0.18644449 & -9750.6869 & 5.52013773 & 0.11490414 \\
13 & -0.7271098 & 0.45992936 & 0.27054512 & 0.80932883 & 0.51829775 & 0.21585683 & 0.19139837 & -9751.2179 & 1.13726334 & 0.19172007 \\
14 & -0.7750442 & 0.46778988 & 0.25646086 & 0.81245021 & 0.51411912 & 0.21450942 & 0.19360685 & -9751.2252 & 1.00910053 & -0.9423667 \\
15 & -0.7090246 & 0.45678856 & 0.27535855 & 0.80819689 & 0.51977265 & 0.21641582 & 0.19065445 & -9751.2151 & 0.68576109 & 0.57268124 \\
16 & -0.7494266 & 0.46338031 & 0.26341052 & 0.81084098 & 0.51624184 & 0.21529106 & 0.19253194 & -9751.2211 & 0.65747338 & -0.2888343 \\
17 & -0.7429071 & 0.46233754 & 0.26537785 & 0.8104111 & 0.51681851 & 0.21546689 & 0.19222166 & -9751.2201 & 0.65058847 & -0.1361475 \\
18 & -0.7401206 & 0.46189156 & 0.2662183 & 0.81022749 & 0.51706495 & 0.21554213 & 0.19208913 & -9751.2197 & 0.64914609 & -0.0724136 \\
19 & -0.7369888 & 0.4613886 & 0.26715947 & 0.81002148 & 0.51734136 & 0.21562718 & 0.19194083 & -9751.2193 & 0.64887078 & -0.0018685 \\
20 & -0.7369633 & 0.4613845 & 0.26716714 & 0.8100198 & 0.51734361 & 0.21562787 & 0.19193962 & -9751.2193 & 0.64887314 & -0.0012986 \\
21 & -0.7369455 & 0.46138165 & 0.26717247 & 0.81001863 & 0.51734517 & 0.21562835 & 0.19193878 & -9751.2193 & 0.64887482 & -0.0009025 \\
22 & -0.7369332 & 0.46137967 & 0.26717617 & 0.81001782 & 0.51734626 & 0.21562869 & 0.1919382 & -9751.2193 & 0.64887601 & -0.0006273 \\
23 & -0.7369246 & 0.46137829 & 0.26717875 & 0.81001726 & 0.51734702 & 0.21562892 & 0.19193779 & -9751.2193 & 0.64887685 & -0.000436\\ \hline
\end{tabular}
}
\vspace{-0.6cm}
\end{table*}

See \cite{nzokem2022fitting} for details on fitting GTS distribution with fast FRFT.

\begin{figure}[ht]
\vspace{-0.3cm}
    \centering
  \begin{subfigure}[b]{0.44\linewidth}
    \includegraphics[width=\linewidth]{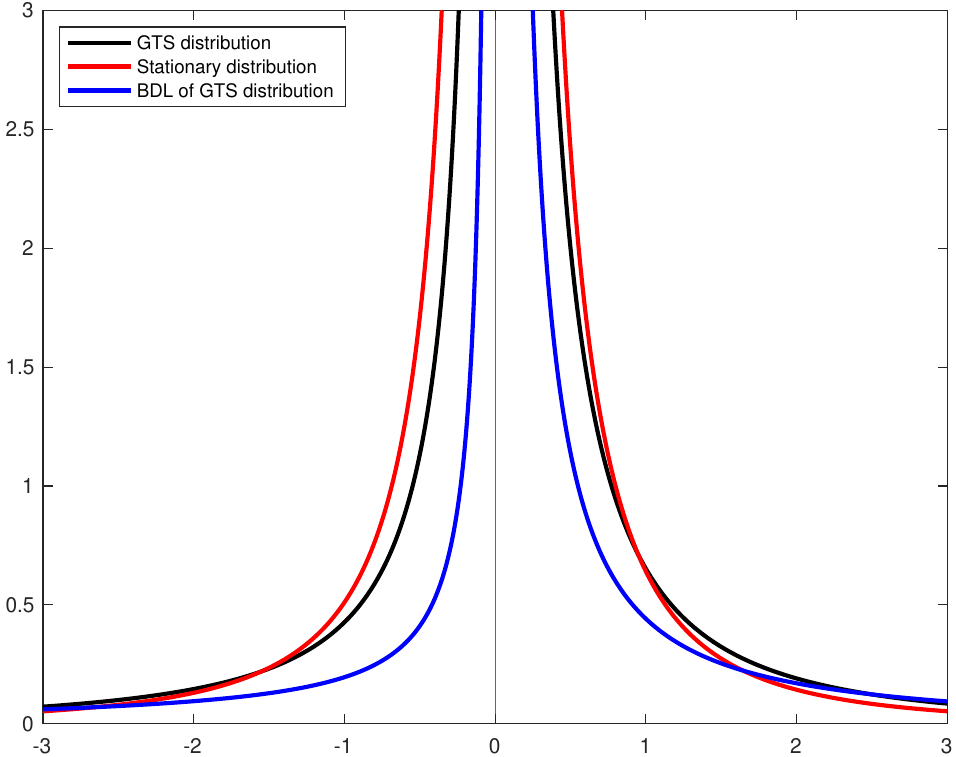}
\vspace{-0.4cm}
     \caption{L\'evy densities }
         \label{fig911}
  \end{subfigure}
  \begin{subfigure}[b]{0.5\linewidth}
    \includegraphics[width=\linewidth]{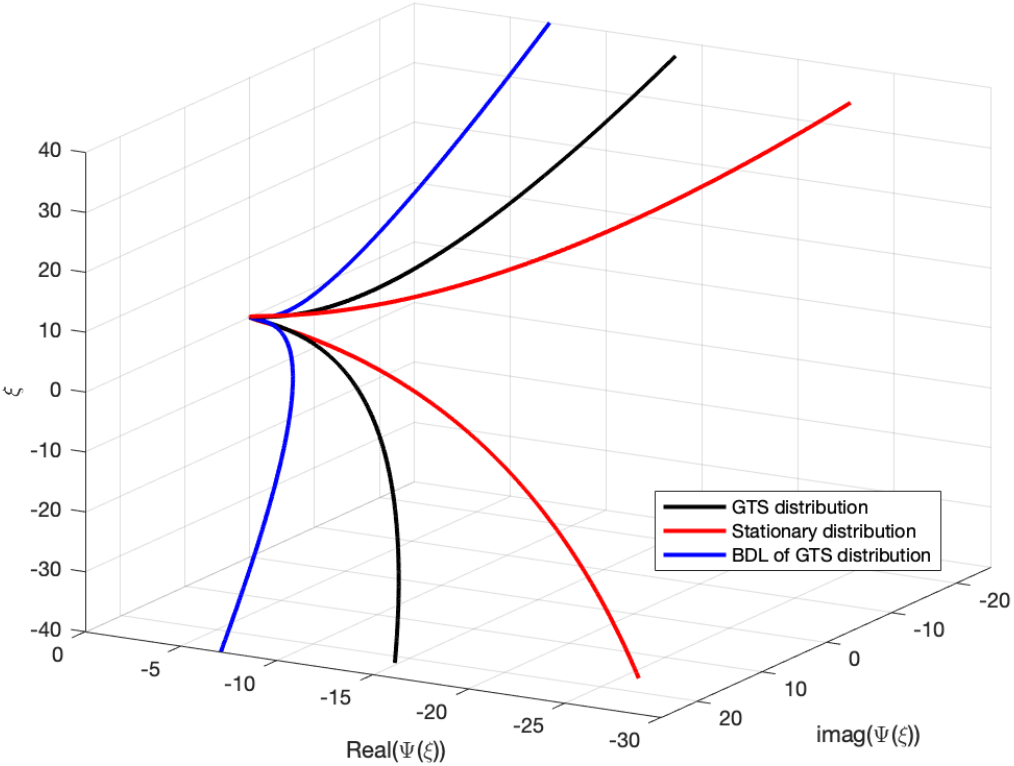}
\vspace{-0.6cm}
     \caption{Characteric exponents}
         \label{fig921}
          \end{subfigure}
\vspace{-0.6cm}
  \caption{Bitcoin Data: $\mu=-0.737459$, $\beta_{+}=0.461722$, $\beta_{-}=0.267500$, $\alpha_{+}=0.810017$, $\alpha_{-}=0.517386$, $\lambda_{+}=0.215545$, $\lambda_{-}=0.191874$}
  \label{fig01}
\vspace{-0.6cm}
\end{figure}

\newpage
\section { Applications and Simulation of S\&P index and Bitcoin process}

\subsection { Stationary process Algorithm of Ornstein-Uhlenbeck type}

We recall the stationary process of Ornstein-Uhlenbeck type $\left\{X(t)\right\}_{t>0}$ and a background driving L\'evy process (BDLP)  $\left\{L(t)\right\}_{t>0}$ independent of $X_{0}$ , such that $X(t)\overset{d}{=}X$ as described in Theorem (\ref{lem2})
\begin{align}
X_{t}=e^{-\lambda t} X_{0} + \int_{0}^{t}e^{-\lambda (t-s)} dL_{\lambda s}=e^{-\lambda t} X_{0} + \int_{0}^{\lambda t}e^{-\lambda t + s} dL_{s}     \quad  \lambda >0. 
\label {eq:l61}
  \end{align}
\begin{theorem} \label{lem8} \ \\
Let $\phi$ be the characteristic exponent of a random variable X  and $\Psi(x)$ be the characteristic exponent of a random variable  $L_{1}$. 
$\left\{X(t)\right\}_{t>0}$ is the stationary process of Ornstein-Uhlenbeck type, such that $X(t)\overset{d}{=}X$.
 \begin{align}
\lim_{t \to +\infty} Log (E\left[e^{i \xi X_{t}}\right])=\phi(\xi)=\int_{0}^{\xi}\frac{\Psi(u)}{u}du   
\label {eq:l62}
  \end{align}
\end{theorem}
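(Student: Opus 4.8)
The plan is to start from the random integral representation (\ref{eq:l61}), written after the change of variable as $X_t = e^{-\lambda t}X_0 + \int_0^{\lambda t} e^{-\lambda t + s}\, dL_s$, and to compute $\log E[e^{i\xi X_t}]$ before letting $t \to +\infty$. First I would use the independence of $X_0$ and the background driving L\'evy process to factor the characteristic function as
\begin{equation*}
E\left[e^{i\xi X_t}\right] = E\left[e^{i\xi e^{-\lambda t}X_0}\right]\cdot E\left[\exp\left(i\xi\int_0^{\lambda t} e^{-\lambda t + s}\, dL_s\right)\right].
\end{equation*}
The first factor tends to $1$ as $t \to +\infty$ because $e^{-\lambda t}X_0 \to 0$ almost surely, hence in distribution, so its logarithm vanishes in the limit and the entire surviving contribution comes from the stochastic integral.

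The key step is to evaluate the characteristic function of the Wiener--L\'evy integral $\int_0^{\lambda t} f(s)\, dL_s$ for the deterministic integrand $f(s) = e^{-\lambda t + s}$. Approximating $f$ by step functions and exploiting the independent and stationary increments of $L$ together with the L\'evy--Khintchine exponent $\Psi$ from Theorem \ref{lem1}, I would obtain the standard identity
\begin{equation*}
\log E\left[\exp\left(i\xi\int_0^{\lambda t} e^{-\lambda t + s}\, dL_s\right)\right] = \int_0^{\lambda t}\Psi\left(\xi e^{-\lambda t + s}\right)\, ds.
\end{equation*}
Substituting $u = \xi e^{-\lambda t + s}$, so that $du = u\, ds$ and the endpoints $s = 0,\ \lambda t$ become $u = \xi e^{-\lambda t},\ \xi$, converts the right-hand side into $\int_{\xi e^{-\lambda t}}^{\xi}\Psi(u)\,\frac{du}{u}$.

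Finally I would let $t \to +\infty$: the lower limit $\xi e^{-\lambda t}\to 0$, and collecting both factors yields
\begin{equation*}
\lim_{t\to+\infty}\log E\left[e^{i\xi X_t}\right] = \int_0^{\xi}\frac{\Psi(u)}{u}\, du = \phi(\xi),
\end{equation*}
which is the claimed identity and is consistent with the Class L characterization (\ref{eq:l284}). The main obstacle is rigorously justifying the Wiener--L\'evy characteristic-function identity and the convergence of the limiting integral near $u=0$. Since $\Psi(0)=0$ and $\Psi$ is differentiable at the origin, the ratio $\Psi(u)/u$ stays bounded as $u\to 0$; this is precisely the integrability condition $E[\log(1+|L_1|)]<\infty$ underlying (\ref{eq:l283}), which guarantees both that $\{X_t\}_{t>0}$ is a well-defined stationary process and that the improper integral converges.
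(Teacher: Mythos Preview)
Your proposal is correct and follows essentially the same route as the paper: compute the log-characteristic function of $X_t$ from the integral representation, use the L\'evy exponent identity $\int_0^{\lambda t}\Psi(\xi e^{-\lambda t+s})\,ds$ for the stochastic integral, substitute $u=\xi e^{-\lambda t+s}$, and send $t\to\infty$. The only cosmetic difference is that the paper takes $X_0$ to be a deterministic initial value (so the first term is simply $i\xi e^{-\lambda t}X_0\to 0$) rather than an independent random variable, and it omits the justifications you supply for the Wiener--L\'evy identity and the integrability of $\Psi(u)/u$ near $0$.
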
 
\begin{proof} \ \\
$X_{0}$ is an initial value, not a variable.
\noindent
\begin{equation*}
 \begin{aligned}
 Log (E\left[e^{i \xi X_{t}}\right]) &= e^{-\lambda t} X_{0} \xi i +  \int_{0}^{\lambda t}Log(E\left[e^{i \xi e^{-\lambda t + s} L_{1}}\right])ds=e^{-\lambda t} X_{0} \xi i +  \int_{0}^{\lambda t}\Psi( \xi e^{-\lambda t + s})ds\\
 &=e^{-\lambda t} X_{0} \xi i +  \int_{\xi e^{-\lambda t + s}}^{\xi}\frac{\Psi( u)}{u}du \quad \quad u=\xi e^{-\lambda t + s}
 \end{aligned}
\end{equation*} 

We take the limit on both side
\begin{equation*}
 \begin{aligned}
\lim_{t \to +\infty} Log (E\left[e^{i \xi X_{t}}\right])=\phi(\xi)=\int_{0}^{\xi}\frac{\Psi(u)}{u}du
 \end{aligned}
\end{equation*} 
\end{proof}

\begin{theorem} \label{lem9} \ \\
\noindent
Let us consider the stationary process of Ornstein-Uhlenbeck type $\left\{X(t)\right\}_{t>0}$ with  background driving L\'evy process (BDLP)  $\left\{L(t)\right\}_{t>0}$, such that $X(t)\overset{d}{=}X$. The discrete-time horizon is $t_{0}< t_{1}< t_{2}< t_{3}, . . . , < t_{n-1}< t_{n}$ with 
$\Delta t = t_{i}-t_{i-1}$. The discrete version of  the stationary process of Ornstein-Uhlenbeck type becomes
 \begin{align}
X_{k\Delta t}=  e^{-\lambda \Delta t}X_{(k-1)\Delta t} + \int_{0}^{\lambda \Delta t}e^{-\lambda \Delta t + s} dL_{s}\label {eq:l03}
  \end{align}
\end{theorem}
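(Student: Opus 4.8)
The plan is to evaluate the random integral representation (\ref{eq:l61}) at the two consecutive grid points $t=k\Delta t$ and $t=(k-1)\Delta t$ and then to eliminate the common initial term $X_{0}$, which should leave a clean one-step recursion. First I would write, directly from (\ref{eq:l61}),
\begin{equation*}
\begin{aligned}
X_{k\Delta t}&=e^{-\lambda k\Delta t}X_{0}+\int_{0}^{\lambda k\Delta t}e^{-\lambda k\Delta t+s}\,dL_{s}, \\
X_{(k-1)\Delta t}&=e^{-\lambda (k-1)\Delta t}X_{0}+\int_{0}^{\lambda (k-1)\Delta t}e^{-\lambda (k-1)\Delta t+s}\,dL_{s}.
\end{aligned}
\end{equation*}
Multiplying the second identity by $e^{-\lambda\Delta t}$ and subtracting it from the first makes the $X_{0}$ contributions cancel, because $e^{-\lambda\Delta t}e^{-\lambda(k-1)\Delta t}=e^{-\lambda k\Delta t}$ and the two integrands then agree as $e^{-\lambda k\Delta t+s}$. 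What should survive is only the integral over the newly added interval,
\begin{equation*}
\begin{aligned}
X_{k\Delta t}-e^{-\lambda\Delta t}X_{(k-1)\Delta t}=\int_{\lambda(k-1)\Delta t}^{\lambda k\Delta t}e^{-\lambda k\Delta t+s}\,dL_{s}.
\end{aligned}
\end{equation*}

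Next I would apply the change of variable $s\mapsto s-\lambda(k-1)\Delta t$, so that the limits of integration become $0$ and $\lambda\Delta t$; the exponent collapses to $-\lambda\Delta t+s$, since $-\lambda k\Delta t+\big(s+\lambda(k-1)\Delta t\big)=-\lambda\Delta t+s$. This is designed to produce exactly the term $\int_{0}^{\lambda\Delta t}e^{-\lambda\Delta t+s}\,dL_{s}$ appearing in the claimed recursion, once the driving increment is re-centred at the origin.

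The step I expect to be the main (and essentially only substantive) obstacle is justifying that this re-centring is legitimate, i.e. that the differential $dL_{s}$ over $[\lambda(k-1)\Delta t,\lambda k\Delta t]$ may be identified with a fresh increment having the same law as $dL_{s}$ over $[0,\lambda\Delta t]$. This is where the defining stationary-increments property (A3) and independent-increments property (A2) of the background driving L\'evy process enter: by (A3) the increments over the two intervals of equal length $\lambda\Delta t$ share the same distribution, and by (A2) the increment over $[\lambda(k-1)\Delta t,\lambda k\Delta t]$ is independent of $\scrF_{\lambda(k-1)\Delta t}$, hence of $X_{(k-1)\Delta t}$. I would therefore phrase the conclusion at the level of the law of the process, so that the integral term acts as an innovation independent of the past, giving the stated one-step recursion for $X_{k\Delta t}$. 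Finally, since the construction preserves $X(t)\overset{d}{=}X$ as guaranteed by the stationary Ornstein--Uhlenbeck representation (\ref{eq:l61}), the stationarity of the discretised sequence $\{X_{k\Delta t}\}$ follows immediately, completing the plan.
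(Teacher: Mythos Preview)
Your proposal is correct and follows essentially the same route as the paper: evaluate the explicit representation (\ref{eq:l61}) at consecutive grid points, peel off $e^{-\lambda\Delta t}X_{(k-1)\Delta t}$, and re-centre the remaining integral via $s\mapsto s-\lambda(k-1)\Delta t$ together with the stationary-increments property of $L$. The paper wraps the same computation in an induction on $k$ (which is in fact vestigial, since the inductive hypothesis is never used), whereas you do it in one subtraction step and are more explicit about invoking (A2)--(A3) to justify $dL_{s+\lambda(k-1)\Delta t}\overset{d}{=}dL_{s}$; both reach the identical conclusion.
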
 
\begin{proof} \ \\
For $k=1$, replace $t$ by $\Delta t$ in the Equation (\ref{eq:l10}) and we have :
\begin{equation*}
\begin{aligned}
X_{\Delta t}=e^{-\lambda \Delta t} X_{0} + \int_{0}^{\lambda \Delta t}e^{-\lambda \Delta t + s} dL_{s}
  \end{aligned}
\end{equation*} 
We assume the equation (\ref{eq:l03}) holds for k. We have the following relation for $k+1$.

\begin{equation*}
\begin{aligned}
X_{(k+1)\Delta t}&=e^{-\lambda (k+1)\Delta t} X_{0} + \int_{0}^{\lambda (k+1)\Delta t}e^{-\lambda (k+1)\Delta t + s} dL_{s} \\
&=e^{-\lambda \Delta t}\left(e^{-\lambda k\Delta t} X_{0} + \int_{0}^{\lambda k\Delta t}e^{-\lambda k\Delta t + s} dL_{s} \right)+ \int_{\lambda k\Delta t}^{\lambda (k+1)\Delta t}e^{-\lambda (k+1)\Delta t + s} dL_{s}\\
&=e^{-\lambda \Delta t}X_{k\Delta t}+ \int_{0}^{\lambda \Delta t}e^{-\lambda \Delta t + u} dL_{u + \lambda k\Delta t} \quad \quad  u=s - \lambda k\Delta t \quad dL_{u + \lambda k\Delta t}=dL_{u} \\
&=e^{-\lambda \Delta t}X_{k\Delta t}+ \int_{0}^{\lambda \Delta t}e^{-\lambda \Delta t + u} dL_{u } \label {eq:l10}
  \end{aligned}
\end{equation*} 
\end{proof} 

\noindent
The relation (\ref{eq:l10})  can be used to simulate $Y=\int_{0}^{\lambda\Delta t} e^{-\lambda\delta t +s} dL_{s}$,  given that $X_{(k-1)\Delta t} \overset{d}{=}X $ and  $X_{k \Delta t} \overset{d}{=}X $. Let $\phi$ be the characteristic exponent of a random variable X. From (\ref{eq:l10}), we have 

\begin{equation}
\begin{aligned}
Log (E\left[e^{i\xi X_{\Delta t}}\right])=Log (E\left[e^{i\xi {e^{-\lambda \Delta t}X_{(k-1)\Delta t}}}\right]) + Log (E\left[e^{i \xi Y}\right]) 
  \end{aligned}
\end{equation}
It is 
\begin{equation}
\begin{aligned}
\phi(\xi) =\phi(e^{-\lambda\Delta t} \xi) + Log (E\left[e^{i \xi Y}\right]) \quad \quad Log (E\left[e^{i \xi Y}\right])=\phi(\xi) - \phi({e^{-\lambda \Delta t}\xi}) \label {eq:l50}
   \end{aligned}
\end{equation}
\medskip
The Fourier transform generated by the characteristic exponent of $Y=\int_{0}^{\lambda\Delta t} e^{-\lambda\delta t +s} dL_{s}$ in (\ref{eq:l50}) and the probability density function (PDF) have the following expression.
\begin{equation}
F_{Y}[f](\xi) =   e^{\phi(-\xi) - \phi(-{e^{-\lambda \Delta t}\xi})}  \quad \quad f_{Y}(y) = \frac{1} {2\pi}\int_{-\infty}^{+\infty}\! F[f](\xi) e^{iy\xi}\, \mathrm{d}\xi . \label {eq:l51}
\end{equation}
\noindent
The composite of a FRFT of a 15-long weighted sequence and a FRFT of an N-long sequence was used to compute the probability density functions \ref{eq:l51}. See \cite{nzokem_2021,aubain2020, nzokem2023enhanced} for details on composite FRFTs and applications.
\begin{figure}[ht]
\vspace{-0.3cm}
    \centering
  \begin{subfigure}[b]{0.42\linewidth}
    \includegraphics[width=\linewidth]{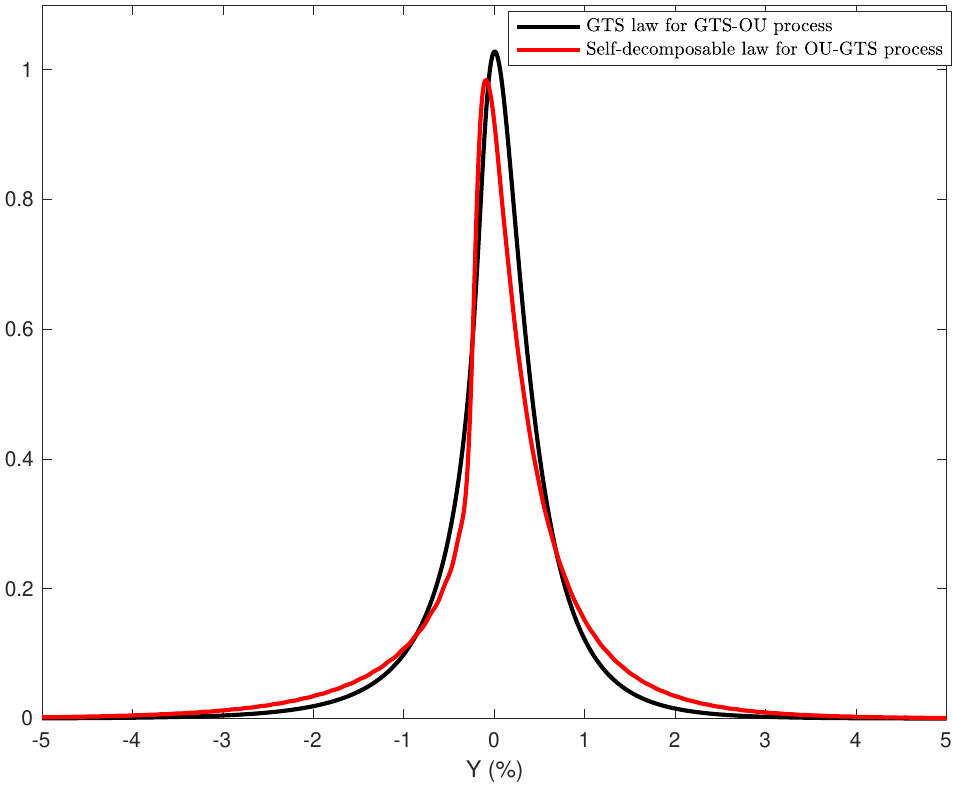}
\vspace{-0.6cm}
     \caption{S\&P500 index Daily Data }
         \label{fig911}
  \end{subfigure}
  \begin{subfigure}[b]{0.43\linewidth}
    \includegraphics[width=\linewidth]{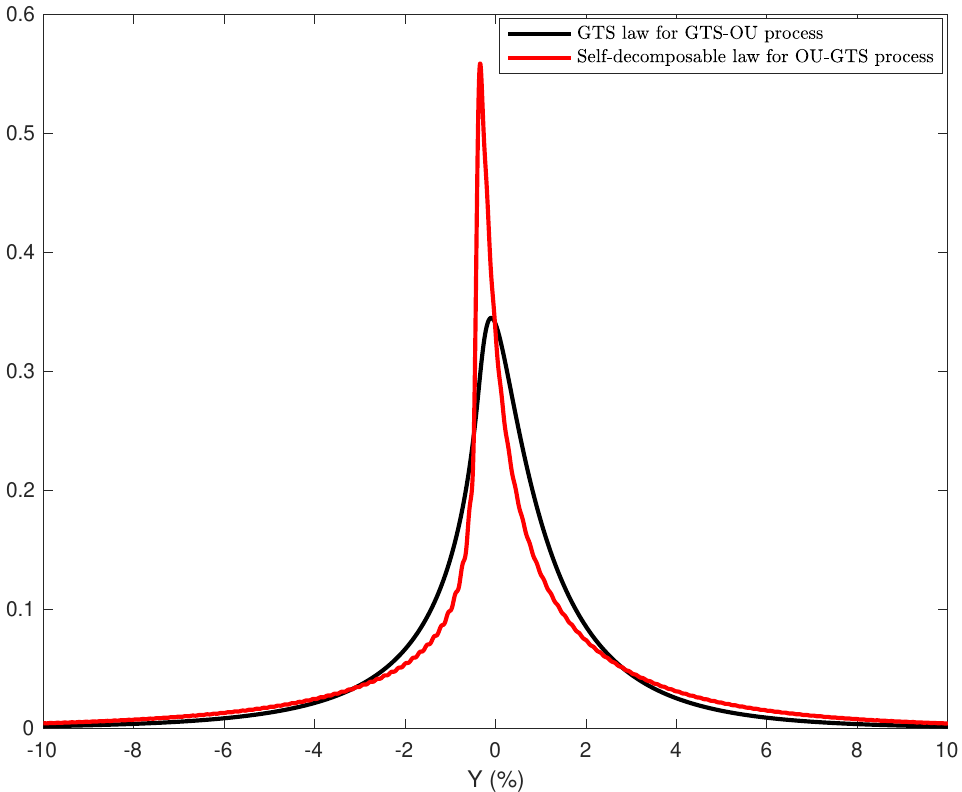}
\vspace{-0.6cm}
     \caption{Bitcoin Daily Data}
         \label{fig921}
          \end{subfigure}
\vspace{-0.6cm}
  \caption{ probability Density Function (PDF) of  $Y=\int_{0}^{\lambda\Delta t} e^{-\lambda\delta t +s} dL_{s}$ }
  \label{fig01}
\vspace{-0.6cm}
\end{figure}
%\newpage
\begin{algorithm}
\caption{Simulating the stationary process of Ornstein-Uhlenbeck type}
\begin{algorithmic}[1]
\Require GTS(\textbf{$\beta_{+}$}, \textbf{$\beta_{-}$}, \textbf{$\alpha_{+}$},\textbf{$\alpha_{-}$}, \textbf{$\lambda_{+}$}, \textbf{$\lambda_{-}$}); discrete-time $t_{0}< t_{1}< t_{2}, . . . , < t_{n-1}< t_{n}$
\State  Initialization: $X_{0} \gets x$.
\State For $i =1: N$
\State $a \gets e^{-\lambda \Delta t_{i}}$ \Comment{ $\Delta t_{i} = t_{i}-t_{i-1}$ and $\lambda=1$ }
\State $u_{i} \sim \mathscr{U}(0, 1)$ \Comment{ $ \mathscr{U}(0, 1)$ is is the Uniform distribution}
\State $y_{i}  \sim F_{Y}^{-1}(u_{i})$ \Comment{ $F_{Y}(y)$ is a CDF associated to the PDF $f_{Y}(y) $ in (\ref{eq:l51})}
\State $X_{i\Delta t_{i}}  \gets  a* X_{(i - 1)\Delta t_{i - 1}} +  y_{i} $
\State EndFor   
\end{algorithmic}
\end{algorithm}
\newpage
\subsection {Applications and Simulation results}
\noindent
The stationary process of Ornstein-Uhlenbeck type is performed using the previously developed Algorithm to simulate the S\&P index and Bitcoin process. The simulations are carried out on the GTS - OU  stationary process of the Ornstein-Uhlenbeck type and the OU - GTS stationary process of the Ornstein-Uhlenbeck type. The theoretical values generated by the the cumulant generating function  $(\ref{eq:l62})$ and the simulation values of means, variance, skewness, and kurtosis are compared; the relative error (Error \%) for each statistical indicator is also provided.\\
\noindent
The relation (\ref{eq:l62})  can be used as a cumulant function of the stationary process of Ornstein-Uhlenbeck type $\left\{X(t)\right\}_{t>0}$. The cumulants ($\kappa_{k}$) and the cumulant generating function of the GTS(\textbf{$\mu$}, \textbf{$\beta_{+}$}, \textbf{$\beta_{-}$}, \textbf{$\alpha_{+}$},\textbf{$\alpha_{-}$}, \textbf{$\lambda_{+}$}, \textbf{$\lambda_{-}$}) distribution was developed in Theorem 2.3 \cite{nzokem2023enhanced}.
\begin{equation}
\begin{aligned}
\Psi(\xi)=Log\left(Ee^{i Y\xi}\right)&= \sum_{j=1}^{+\infty}\kappa_{j}\frac{(i\xi)^{j}}{j!} \quad \kappa_{1}= \mu + \alpha_{+}{\frac{\Gamma(1-\beta_{+})}{\lambda_{+}^{1-\beta_{+}}}} - \alpha_{-}{\frac{\Gamma(1-\beta_{-})}{\lambda_{-}^{1-\beta_{-}}}} \\
 \kappa_{k} &=\alpha_{+}{\frac{\Gamma(k-\beta_{+})}{\lambda_{+}^{k-\beta_{+}}}} + (-1)^{k} \alpha_{-}{\frac{\Gamma(k-\beta_{-})}{\lambda_{-}^{k-\beta_{-}}}}\label {eq:l52}
  \end{aligned}
\end{equation}
where $ \kappa_{k}$ is the $k^{th}$ cumulant of the GTS distribution.\\
\noindent
The cumulant generating function of the stationary process of Ornstein-Uhlenbeck type $\left\{X(t)\right\}_{t>0}$  is deduced from relation (\ref{eq:l52})
\begin{equation}
\begin{aligned}
\phi(\xi)=\int_{0}^{\xi}\frac{\Psi(u)}{u}du =\sum_{k=1}^{+\infty}\frac{\kappa_{k}}{k}\frac{(i\xi)^{k}}{k!}  \quad m_{1}=\kappa_{1} \quad m_{k}=\frac{\kappa_{k}}{k}
\end{aligned}
\end{equation}
\noindent
We have the following relation for the mean, Variance, skewness, and Kurtosis for the stationary process of Ornstein-Uhlenbeck type $\left\{X(t)\right\}_{t>0}$.
\begin{equation}
\begin{aligned}
m_{1}&=\kappa_{1} = \mu + \alpha_{+}{\frac{\Gamma(1-\beta_{+})}{\lambda_{+}^{1-\beta_{+}}}} - \alpha_{-}{\frac{\Gamma(1-\beta_{-})}{\lambda_{-}^{1-\beta_{-}}}}\\
m_{2}&=\frac{\kappa_{2}}{2}=\frac{1}{2}\left[\alpha_{+}{\frac{\Gamma(k-\beta_{+})}{\lambda_{+}^{k-\beta_{+}}}} + (-1)^{k} \alpha_{-}{\frac{\Gamma(k-\beta_{-})}{\lambda_{-}^{k-\beta_{-}}}}\right]\\
\frac{m_{3}}{m_{2}^{\frac{3}{2}}}&=\frac{2^{\frac{3}{2}}}{3}\frac{\kappa_{3}}{\kappa_{2}^{\frac{3}{2}}} \quad \quad \left (3 + \frac{m_{4}}{m_{2}^{2}}\right)= \left (3 + \frac{\kappa_{4}}{\kappa_{2}^{2}}\right)\label {eq:l53}
\end{aligned}
\end{equation}
%\newpage
%\subsubsection { Bitcoin Process as a stationary process of Ornstein-Uhlenbeck type}\\
\subsubsection { Results for OU- GTS Process}
\noindent
The daily cumulative returns for the S\&P500 index and Bitcoin are simulated under the assumption that the daily return follows the stationary distribution defined by the L\'evy measure (\ref{eq:lem64}). The simulations are provided by the stationary process of Ornstein-Uhlenbeck type (\ref{eq:l03}). The sample path shows in Fig \ref{fig51} for the S\&P500 daily cumulative returns and in Fig \ref{fig52} for the Bitcoin daily cumulative returns. The sample path in red is compared to the actual values of daily cumulative returns in black.
\begin{figure}[ht]
\vspace{-0.3cm}
    \centering
\hspace{-0.6cm}
  \begin{subfigure}[b]{0.47\linewidth}
    \includegraphics[width=\linewidth]{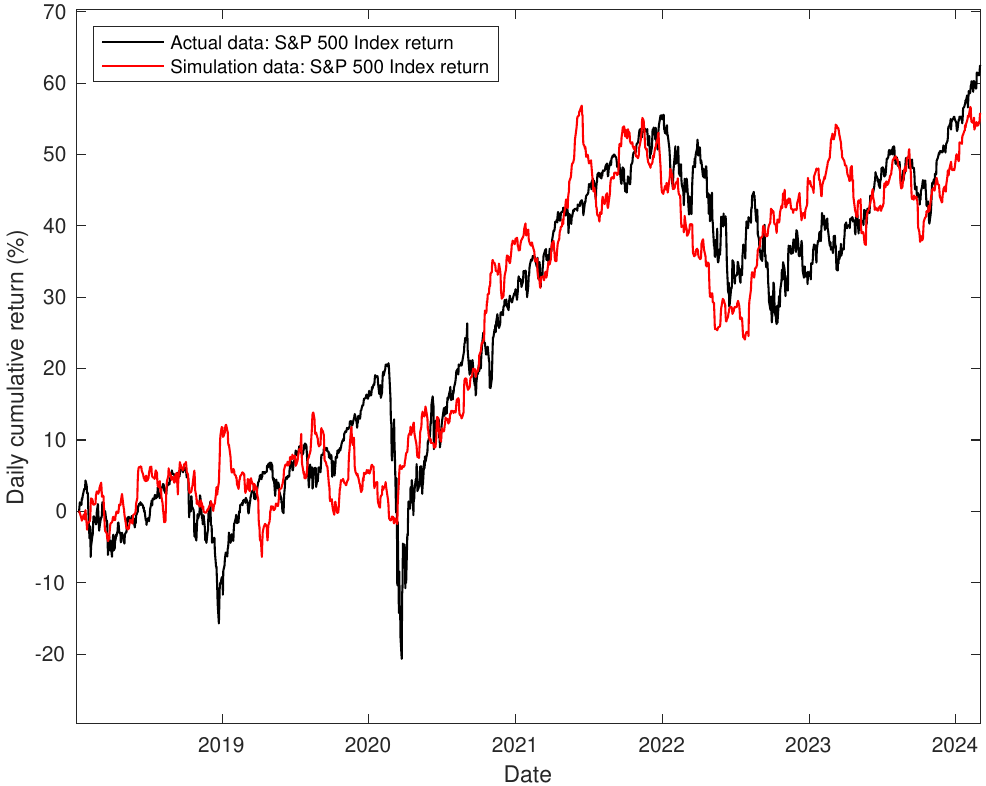}
\vspace{-0.5cm}
     \caption{ S\&P 500 index daily returns}
         \label{fig51}
  \end{subfigure}
\hspace{-0.3cm}
  \begin{subfigure}[b]{0.47\linewidth}
    \includegraphics[width=\linewidth]{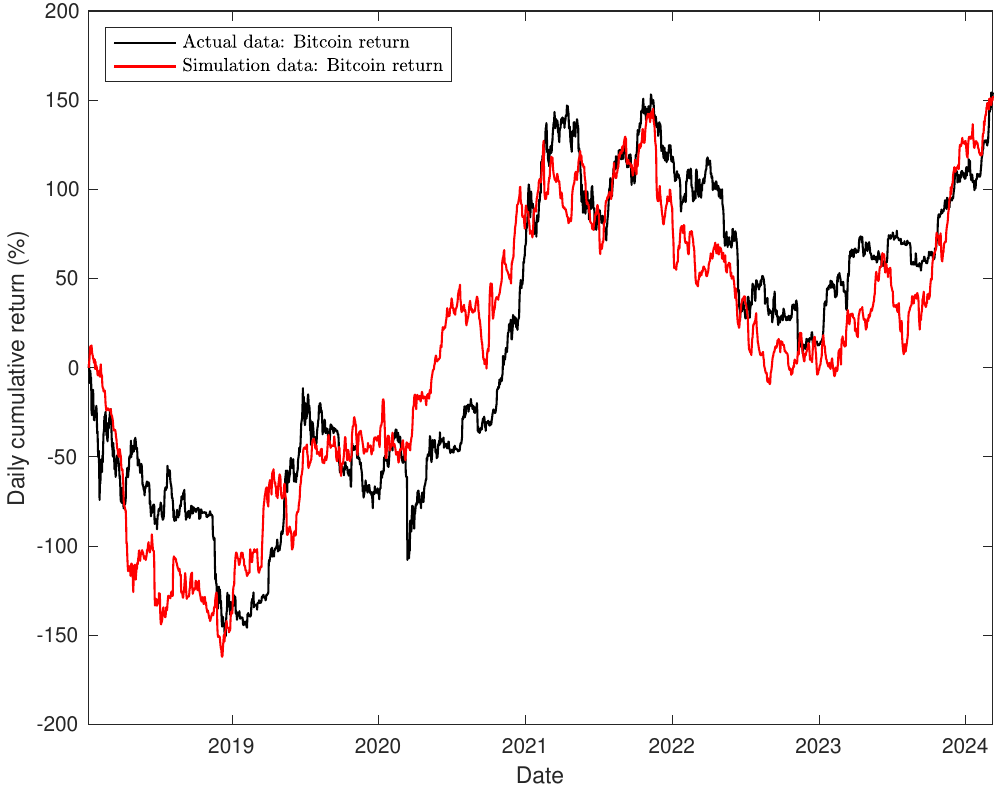}
\vspace{-0.5cm}
     \caption{Bitcoin daily returns}
         \label{fig52}
          \end{subfigure}
\vspace{-0.5cm}
 \caption{Simulation OU- GTS Process: daily cumulative returns ($\%$)}
  \label{fig53}
\vspace{-0.6cm}
\end{figure}

\noindent
The mean ($m_{1}$), Variance  ($m_{2}$), skewness and Kurtosis developed in (\ref{eq:l53}) for the stationary distribution were computed and summarised in Table (\ref{tab8}), on a raw labelled exact value. These theoretical values are compared with the empirical values from sample paths generated by the stationary process of Ornstein-Uhlenbeck type (\ref{eq:l03}). The sample size of the sample paths increases from 1000 to 5000. As shown in Table (\ref{tab8}), the empirical value converges to the exact value when the sample size increases for each statistical indicator. 

\begin{table}[ht]
\vspace{-0.3cm}
\centering
\caption{S$\&$P 500 return theoretical values versus the empirical values of the OU-GTS stationary process}
\vspace{-0.3cm}
\label{tab8}
\setlength\extrarowheight{1.3pt} % for a less cramped look
\setlength\tabcolsep{1.3pt}  
%\resizebox{13cm}{!}{%
\begin{tabular}{@{}c|cc|cc|cc|cc@{}}
\toprule
 & \multicolumn{2}{c}{$\displaystyle{\lim_{t \to \infty}}E\left[X_{t}|X_{0}\right]$} & \multicolumn{2}{|c|}{$\displaystyle{\lim_{t \to \infty}}Var\left[X^{2}_{t}|X_{0}\right]$} & \multicolumn{2}{c|}{$\displaystyle{\lim_{t \to \infty}}Skew \left[X_{t}|X_{0}\right]$} & \multicolumn{2}{c}{$\displaystyle{\lim_{t \to \infty}}Kurtosis \left[X_{t}|X_{0}\right]$} \\ \toprule
 Exact value& \multicolumn{2}{c|}{0.04013} & \multicolumn{2}{c|}{0.77410} & \multicolumn{2}{c|}{-0.54649} & \multicolumn{2}{c}{8.92320} \\
\toprule
\multicolumn{1}{c|}{\multirow{2}{*}{Sample Size}} & \multicolumn{8}{c}{Empirical Statistics from Simulations} \\ \cmidrule(l){2-9} 
\multicolumn{1}{c|}{} & \multicolumn{1}{c|}{} & \multicolumn{1}{c|}{Error(\%)} & \multicolumn{1}{c|}{} & \multicolumn{1}{c|}{Error(\%)} & \multicolumn{1}{c|}{} & \multicolumn{1}{c|}{Error(\%)} & \multicolumn{1}{c|}{} & \multicolumn{1}{c}{Error(\%)} \\ \toprule

\multirow{1}{*}{1000} & \multirow{1}{*}{0.03835} & \multirow{1}{*}{-4.45524} & \multirow{1}{*}{0.76772} & \multirow{1}{*}{-0.82393} & \multirow{1}{*}{-0.44509} & \multirow{1}{*}{-18.55559} & \multirow{1}{*}{7.53707} & -15.53397 \\ \midrule
\multirow{1}{*}{1500} & \multirow{1}{*}{0.03488} & \multirow{1}{*}{-13.09001} & \multirow{1}{*}{0.77405} & \multirow{1}{*}{-0.00658} & \multirow{1}{*}{-0.57732} & \multirow{1}{*}{5.64163} & \multirow{1}{*}{8.98251} & 0.66473 \\ \midrule
\multirow{1}{*}{2500} & \multirow{1}{*}{0.03956} & \multirow{1}{*}{-1.41919} & \multirow{1}{*}{0.77140} & \multirow{1}{*}{-0.34865} & \multirow{1}{*}{-0.53762} & \multirow{1}{*}{-1.62352} & \multirow{1}{*}{8.52938} & -4.41345 \\ \midrule
\multirow{1}{*}{5000} & \multirow{1}{*}{0.04039} & \multirow{1}{*}{0.64644} & \multirow{1}{*}{0.77349} & \multirow{1}{*}{-0.07943} & \multirow{1}{*}{-0.54792} & \multirow{1}{*}{0.26251} & \multirow{1}{*}{8.87311} & \multirow{1}{*}{-0.56136} \\ \bottomrule
\end{tabular}
%}
\end{table}
\noindent
Table (\ref{tab8}) shows the results for S$\&$P 500 returns, whereas the results for Bitcoin returns are displayed in Table \ref{tab9} in Appendice \ref{eq:an2}. Both tables show that the simulation process is effective, and each sample path comes from the OU-GTS stationary process. 
 %\clearpage
%\newpage
%\subsubsection { S$\&$P 500 Index Process as a stationary process of Ornstein-Uhlenbeck type}\\
\subsubsection { Results for GTS - OU Process}
\noindent
The daily cumulative returns for S\&P500 index and Bitcoin are simulated under the assumption that the daily return follows the GTS distribution. The sample path shows in Fig \ref{fig61} for S\&P500 daily cumulative returns and in Fig \ref{fig62} for Bitcoin daily cumulative returns. The sample path in red is compared to the actual values of daily cumulative returns in black.\begin{figure}[ht]
\vspace{-0.3cm}
    \centering
\hspace{-0.5cm}
  \begin{subfigure}[b]{0.47\linewidth}
    \includegraphics[width=\linewidth]{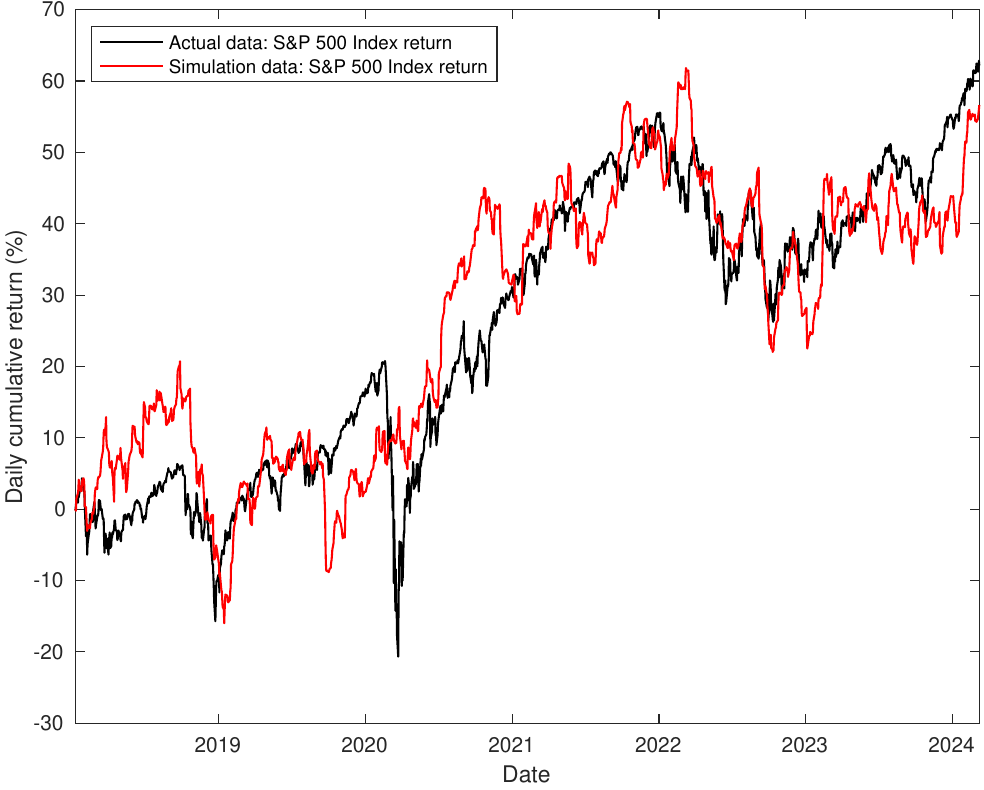}
\vspace{-0.5cm}
     \caption{ S\&P 500 index daily returns}
         \label{fig61}
  \end{subfigure}
\hspace{-0.3cm}
  \begin{subfigure}[b]{0.47\linewidth}
    \includegraphics[width=\linewidth]{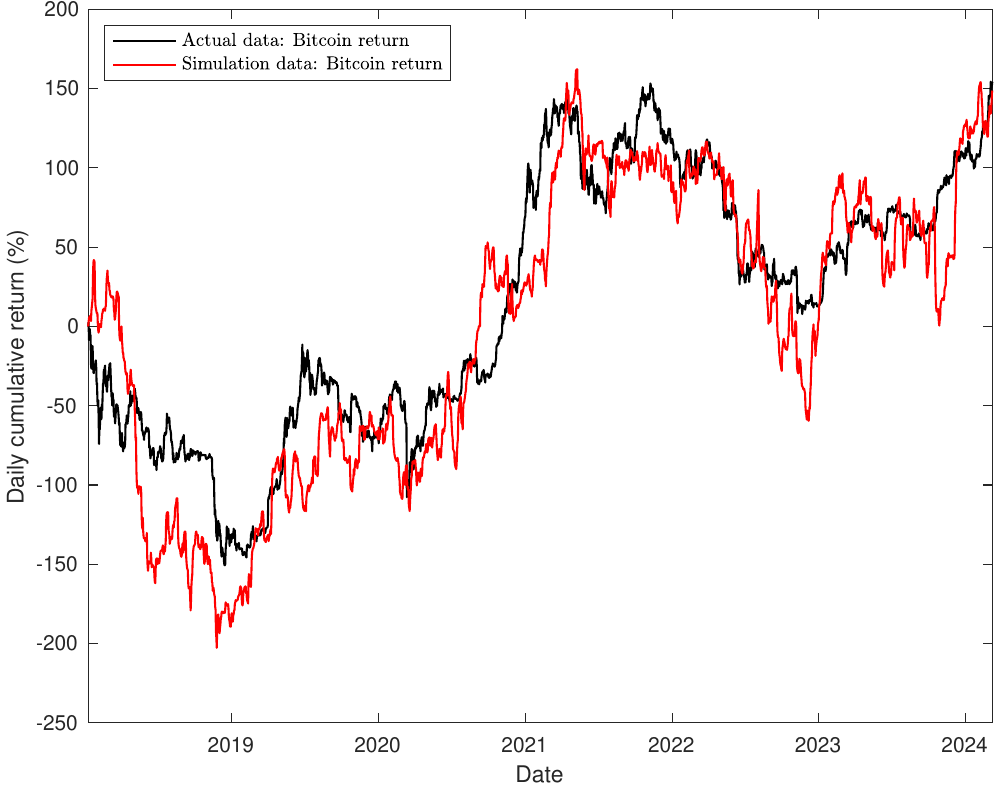}
\vspace{-0.5cm}
     \caption{Bitcoin daily returns}
         \label{fig62}
          \end{subfigure}
\vspace{-0.5cm}
 \caption{Simulation GTS - OU Process: Daily cumulative returns ($\%$)}
  \label{fig63}
\vspace{-0.6cm}
\end{figure}

\noindent
The mean ($m_{1}$) , Variance  ($m_{2}$), skewness and Kurtosis developed in (\ref{eq:l52}) for GTS distribution were computed and summarised in Table (\ref{tab10}). The empirical value converges to the exact value for each statistical indicator when the sample size increases.  
\begin{table}[ht]
\vspace{-0.3cm}
\centering
\caption{ S$\&$P 500 return theoretical values versus empirical values of the GTS - OU  stationary process}
\vspace{-0.3cm}
\label{tab10}
\setlength\extrarowheight{1.3pt} % for a less cramped look
\setlength\tabcolsep{1.3pt}  
%\resizebox{13cm}{!}{%
\begin{tabular}{@{}c|cc|cc|cc|cc@{}}
\toprule
 & \multicolumn{2}{c}{$\displaystyle{\lim_{t \to \infty}}E\left[X_{t}|X_{0}\right]$} & \multicolumn{2}{|c|}{$\displaystyle{\lim_{t \to \infty}}Var\left[X^{2}_{t}|X_{0}\right]$} & \multicolumn{2}{c|}{$\displaystyle{\lim_{t \to \infty}}Skew \left[X_{t}|X_{0}\right]$} & \multicolumn{2}{c}{$\displaystyle{\lim_{t \to \infty}}Kurtosis \left[X_{t}|X_{0}\right]$} \\ \toprule
\multirow{1}{*}{Exact value} & \multicolumn{2}{c|}{0.04013} & \multicolumn{2}{c|}{1.09475} & \multicolumn{2}{c|}{-0.57964} & \multicolumn{2}{c|}{8.92320} \\ \toprule
\multicolumn{1}{c|}{\multirow{2}{*}{Sample Size}} & \multicolumn{8}{c}{Empirical Statistics from Simulations} \\ \cmidrule(l){2-9} 
\multicolumn{1}{c|}{} & \multicolumn{1}{c|}{} & \multicolumn{1}{c|}{Error(\%)} & \multicolumn{1}{c|}{} & \multicolumn{1}{c|}{Error(\%)} & \multicolumn{1}{c|}{} & \multicolumn{1}{c|}{Error(\%)} & \multicolumn{1}{c|}{} & \multicolumn{1}{c}{Error(\%)} \\ \toprule

\multirow{1}{*}{1000} & \multirow{1}{*}{0.03682} & \multirow{1}{*}{-8.26002} & \multirow{1}{*}{1.08798} & \multirow{1}{*}{-0.61770} & \multirow{1}{*}{-0.56865} & \multirow{1}{*}{-1.89692} & \multirow{1}{*}{8.83489} & \multirow{1}{*}{-0.98962} \\ \midrule
\multirow{1}{*}{1500} & \multirow{1}{*}{0.04204} & \multirow{1}{*}{4.75799} & \multirow{1}{*}{1.10069} & \multirow{1}{*}{0.54257} & \multirow{1}{*}{-0.57351} & \multirow{1}{*}{-1.05806} & \multirow{1}{*}{8.56435} & \multirow{1}{*}{-4.02145} \\ \midrule
\multirow{1}{*}{2500} & \multirow{1}{*}{0.04224} & \multirow{1}{*}{5.23894} & \multirow{1}{*}{1.09164} & \multirow{1}{*}{-0.28379} & \multirow{1}{*}{-0.53099} & \multirow{1}{*}{-8.39354} & \multirow{1}{*}{8.64516} & \multirow{1}{*}{-3.11584} \\ \midrule
\multirow{1}{*}{5000} & \multirow{1}{*}{0.04178} & \multirow{1}{*}{4.11213} & \multirow{1}{*}{1.09141} & \multirow{1}{*}{-0.30449} & \multirow{1}{*}{-0.56619} & \multirow{1}{*}{-2.32057} & \multirow{1}{*}{8.70841} & \multirow{1}{*}{-2.40708} \\ \bottomrule
\end{tabular}%
%}
\end{table}

\noindent
Table (\ref{tab10}) shows the results for S$\&$P 500 Returns, whereas the results for Bitcoin returns are displayed in Table \ref{tab11} in Appendice \ref{eq:an2}. Both tables show that the simulation process is effective, and each sample path comes from the GTS-OU stationary process.  

%\clearpage
%\newpage
\begin{appendices}
\section{Bitcoin: Theoretical versus Empirical values }\label{eq:an2}
\begin{table}[ht]
\vspace{-0.3cm}
\centering
\caption{Bitcoin return theoretical values versus empirical values of the OU-GTS stationary process}
\vspace{-0.3cm}
\label{tab9}
\setlength\extrarowheight{1.3pt} % for a less cramped look
\setlength\tabcolsep{1.3pt}  
%\resizebox{13cm}{!}{%
\begin{tabular}{@{}c|cc|cc|cc|cc@{}}
\toprule
 & \multicolumn{2}{c}{$\displaystyle{\lim_{t \to \infty}}E\left[X_{t}|X_{0}\right]$} & \multicolumn{2}{|c|}{$\displaystyle{\lim_{t \to \infty}}Var\left[X^{2}_{t}|X_{0}\right]$} & \multicolumn{2}{c|}{$\displaystyle{\lim_{t \to \infty}}Skew \left[X_{t}|X_{0}\right]$} & \multicolumn{2}{c}{$\displaystyle{\lim_{t \to \infty}}Kurtosis \left[X_{t}|X_{0}\right]$} \\ \toprule
\multirow{1}{*}{Exact value} & \multicolumn{2}{c|}{0.14890} & \multicolumn{2}{c|}{2.81898} & \multicolumn{2}{c|}{-0.30158} & \multicolumn{2}{c}{9.74634} \\ \toprule
\multicolumn{1}{c|}{\multirow{2}{*}{Sample Size}} & \multicolumn{8}{c}{Empirical Statistics from Simulations} \\ \cmidrule(l){2-9} 
\multicolumn{1}{c|}{} & \multicolumn{1}{c|}{} & \multicolumn{1}{c|}{Error(\%)} & \multicolumn{1}{c|}{} & \multicolumn{1}{c|}{Error(\%)} & \multicolumn{1}{c|}{} & \multicolumn{1}{c|}{Error(\%)} & \multicolumn{1}{c|}{} & \multicolumn{1}{c}{Error(\%)} \\ \toprule
\multirow{1}{*}{1000} & \multirow{1}{*}{0.13623} & \multirow{1}{*}{-8.51088} & \multirow{1}{*}{2.81760} & \multirow{1}{*}{-0.04900} & \multirow{1}{*}{-0.30070} & \multirow{1}{*}{-0.29135} & \multirow{1}{*}{8.23142} & \multirow{1}{*}{-15.54341} \\ \midrule
\multirow{1}{*}{1500} & \multirow{1}{*}{0.13806} & \multirow{1}{*}{-7.28426} & \multirow{1}{*}{2.80073} & \multirow{1}{*}{-0.64725} & \multirow{1}{*}{-0.32346} & \multirow{1}{*}{7.25614} & \multirow{1}{*}{9.37698} & \multirow{1}{*}{-3.78972} \\ \midrule
\multirow{1}{*}{2500} & \multirow{1}{*}{0.15173} & \multirow{1}{*}{1.89851} & \multirow{1}{*}{2.81712} & \multirow{1}{*}{-0.06598} & \multirow{1}{*}{-0.29138} & \multirow{1}{*}{-3.38203} & \multirow{1}{*}{9.57965} & \multirow{1}{*}{-1.71027} \\ \midrule
\multirow{1}{*}{5000} & \multirow{1}{*}{0.14987} & \multirow{1}{*}{0.64693} & \multirow{1}{*}{2.81808} & \multirow{1}{*}{-0.03186} & \multirow{1}{*}{-0.29731} & \multirow{1}{*}{-1.41514} & \multirow{1}{*}{9.66934} & \multirow{1}{*}{-0.79007} \\ \bottomrule
\end{tabular}%
%}
%\vspace{-0.6cm}
\end{table}

\begin{table}[ht]
\vspace{-0.3cm}
\centering
\caption{Bitcoin return theoretical values versus empirical values of the GTS - OU  stationary process}
\vspace{-0.3cm}
\label{tab11}
\setlength\extrarowheight{1.3pt} % for a less cramped look
\setlength\tabcolsep{1.3pt}  
%\resizebox{13cm}{!}{%
\begin{tabular}{@{}c|cc|cc|cc|cc@{}}
\toprule
 & \multicolumn{2}{c}{$\displaystyle{\lim_{t \to \infty}}E\left[X_{t}|X_{0}\right]$} & \multicolumn{2}{|c|}{$\displaystyle{\lim_{t \to \infty}}Var\left[X^{2}_{t}|X_{0}\right]$} & \multicolumn{2}{c|}{$\displaystyle{\lim_{t \to \infty}}Skew \left[X_{t}|X_{0}\right]$} & \multicolumn{2}{c}{$\displaystyle{\lim_{t \to \infty}}Kurtosis \left[X_{t}|X_{0}\right]$} \\ \toprule
\multirow{1}{*}{Exact value}  & \multicolumn{2}{c|}{0.14890} & \multicolumn{2}{c|}{3.98664} & \multicolumn{2}{c|}{-0.31987} & \multicolumn{2}{c|}{9.74634} \\ \toprule
\multicolumn{1}{c|}{\multirow{2}{*}{Sample Size}} & \multicolumn{8}{c}{Empirical Statistics from Simulations} \\ \cmidrule(l){2-9} 
\multicolumn{1}{c|}{} & \multicolumn{1}{c|}{} & \multicolumn{1}{c|}{Error(\%)} & \multicolumn{1}{c|}{} & \multicolumn{1}{c|}{Error(\%)} & \multicolumn{1}{c|}{} & \multicolumn{1}{c|}{Error(\%)} & \multicolumn{1}{c|}{} & \multicolumn{1}{c}{Error(\%)} \\ \toprule
\multirow{1}{*}{1000} & \multirow{1}{*}{0.15521} & \multirow{1}{*}{4.23755} & \multirow{1}{*}{4.00641} & \multirow{1}{*}{0.49589} & \multirow{1}{*}{-0.24091} & \multirow{1}{*}{-24.68461} & \multirow{1}{*}{9.49887} & -2.53906 \\ \midrule
\multirow{1}{*}{1500} & \multirow{1}{*}{0.11793} & \multirow{1}{*}{-20.80070} & \multirow{1}{*}{3.98615} & \multirow{1}{*}{-0.01217} & \multirow{1}{*}{-0.36310} & \multirow{1}{*}{13.51377} & \multirow{1}{*}{9.56633} & -1.84691 \\ \midrule
\multirow{1}{*}{2500} & \multirow{1}{*}{0.14404} & \multirow{1}{*}{-3.26698} & \multirow{1}{*}{3.96137} & \multirow{1}{*}{-0.63372} & \multirow{1}{*}{-0.33935} & \multirow{1}{*}{6.08765} & \multirow{1}{*}{9.22822} & -5.31603 \\ \midrule
\multirow{1}{*}{5000} & \multirow{1}{*}{0.15275} & \multirow{1}{*}{2.58583} & \multirow{1}{*}{3.97762} & \multirow{1}{*}{-0.22608} & \multirow{1}{*}{-0.32867} & \multirow{1}{*}{2.75017} & \multirow{1}{*}{9.42721} & \multirow{1}{*}{-3.27430} \\ \bottomrule
\end{tabular}%
%}
\end{table}
\end{appendices}
%\clearpage
\newpage
\bibliography{fourierbis}
\end{document}